\numberwithin{equation}{section}
\def\Z{{\mathbb Z}}
\def\Q{{\mathbb Q}}
\def\C{{\mathbb C}}
\def\H{{\mathbb H}}
\def\L{{\mathbb L}}
\def\V{{\mathbb V}}
\def\UU{{\mathbb U}}
\def\MM{{\mathbb M}}
\def\A{{\mathcal A}}
\def\cC{{\mathcal C}}
\def\D{{\mathcal D}}
\def\cG{{\mathcal G}}
\def\cH{{\mathcal H}}
\def\M{{\mathcal M}}
\def\T{{\mathcal T}}
\def\U{{\mathcal U}}
\def\X{{\mathcal X}}
\def\Y{{\mathcal Y}}
\def\G{\Gamma}
\def\a{{\mathfrak a}}
\def\f{{\mathfrak f}}
\def\g{{\mathfrak g}}
\def\h{{\mathfrak h}}
\def\n{{\mathfrak n}}
\def\t{{\mathfrak t}}
\def\u{{\mathfrak u}}
\def\Gm{{\mathbb{G}_m}}
\def\Sp{{\mathrm{Sp}}}
\def\SL{{\mathrm{SL}}}
\def\GL{{\mathrm{GL}}}
\def\orb{\mathrm{orb}}
\def\Atwobar{\overline{\A}_2}
\def\AAbar{\overline{\A}_1\times\overline{\A}_1}
\newcommand\id{\operatorname{id}}
\newcommand\Hom{\operatorname{Hom}}
\newcommand\Aut{\operatorname{Aut}}
\newcommand\Gr{\operatorname{Gr}}
\newtheorem{theorem}{Theorem}[section]
\newtheorem{proposition}[theorem]{Proposition}
\newtheorem{corollary}[theorem]{Corollary}
\newtheorem{bigtheorem}{Theorem}
\theoremstyle{definition}
\newtheorem{example}[theorem]{Example}
\theoremstyle{remark}
\newtheorem{remark}[theorem]{Remark}
\begin{document}


\title{On the completion of the mapping class group of genus two}


\author{Tatsunari Watanabe}
\address{Department of Mathematics, Purdue University, 
West Lafayette}
\email{twatana@purdue.edu}
\urladdr{www.math.purdue.edu/people/bio/twatana} 





%


 \maketitle



\begin{abstract} In this paper, we will study the Lie algebra of the prounipotent radical of the relative completion of the mapping class group of genus two.  In particular, we will partially determine a minimal presentation of the Lie algebra by determining the generators and bounding the degree of the relations of it. 
	
\end{abstract}

\section{Introduction}
Let $X$ be a complex smooth algebraic variety. Fix a point $x$ in $X$. The Lie algebra $\g$ of the unipotent (Malcev) completion over $\Q$ of $\pi_1(X,x)$ carries a natural $\Q$-MHS. A minimal presentation of $\Gr^W_\bullet\g$ was studied by Morgan in \cite{mor}. It is generated in weight $-1$ and $-2$ by $H_1(X, \Q)$ and has relations in weight $-2,-3$, and $-4$ that come from $H_2(X, \Q)$.  \\
\indent Denote by $\M_{g}$ the moduli stack of projective smooth curves of genus $g$ over $\C$. Assume that $g\geq 2$. It will be considered as an orbifold in this paper.  It is a natural problem to look for a presentation of the Lie algebra of the unipotent completion of $\pi_1(\M_g)$, since $\M_g$ is smooth and  has a finite cover by a smooth algebraic variety. The issue is that the rational homology $H_1(\M_g, \Q)$ vanishes and hence the unipotent completion is trivial. Instead, we will consider the relative completion of a discrete group due to Deligne, which generalizes the unipotent completion.   Associated to the universal family $f:\cC_g\to \M_{g}$, there is a natural monodromy representation in the orbifold sense
$$\rho_x: \pi_1^\orb(\M_{g},x)\to \Sp(H^1(C,\Q)),$$
where $C$ is the fiber of $f$ over $x$. The orbifold fundamental group $\pi_1(\M_g, x)$ is naturally isomorphic to the mapping class group $\G_g$. It is the group of isotopy classes of orientation-preserving diffeomorphisms of a compact oriented surface $S_g$ of genus $g$.  The representation $\rho_x$ can be also seen from the action of $\G_{g}$ on $H_1(S_g, \Z)$. The relative completion $\cG_g$ of $\G_g$ with respect to $\rho_x$ is the inverse limit of all linear algebraic $\Q$-group $G$ that is an extension of $\Sp_g$ by a unipotent $\Q$-group and such that there is a Zariski-dense representation $\phi_G:\G_g\to G(\Q)$ making the diagram
$$\xymatrix{
	\G_g\ar[rd]^{\rho_x}\ar[d]^{\phi_G}&\\
	G(\Q)\ar[r]&\Sp_g(\Q)
	}
$$
commute.
The completion $\cG_g$ is a proalgebraic $\Q$-group that is an extension of $\Sp_g$ with a prounipotent $\Q$-group $\U_g$.
  In \cite{hain3}, Hain further developed the theory of relative completion and constructed a canonical MHS on the Lie algebra $\u_g$ of $\U_g$.  The associated graded Lie algebra $\Gr^W_\bullet\u_g$ is the one we are interested in.  Hain proved in \cite{hain4, hain0} that for $g\geq 4$, the Lie algebra $\u_g$ is generated in weight $-1$ and quadratically presented and that for $g=3$, it is generated in weight $-1$ and admits quadratic and cubic relations. \\
\indent  Hain used the Johnson's fundamental work \cite{joh1} to determine the generators  and Kabanov's theorem \cite{kab} on $H^2(\M_g, \V)$, where $\V$ is a symplectic local system, to bound the degree of relations, which then allowed him to explicitly determine the relations by producing certain commuting two elements in the Torelli group. In this paper, we will partially extend Hain's work to the case $g=2$. More precisely, we will determine the generator of the graded Lie algebra $\Gr^W_\bullet \u_2$ and bound the degree of its relations. Let $V_{a+b}$ denote the irreducible representation of $\Sp_2$ corresponding to the partition $n=a+b$ with $a\geq b$ of a nonnegative integer $n$. By default, we assume that $V_{a+b}$ has weight $a+b$ and let $V_{a+b}(m)$  denote the Tate twist of $V_{a+b}$ by $\Q(m)$ of weight $a+b-2m$.
\begin{bigtheorem}\label{main result} There is an isomorphism of graded Lie algebras 
	$$
	\L(V_{2+2}(3))/ R\cong \Gr^W_\bullet\u_2
	$$
	in the category of $\Sp_2$-representations. The Lie ideal $R$ is generated by a subrepresentation of
	$$\bigoplus_{a+b}V_{a+b}(a+2)$$ 
	where the sum  is taken over the partitions $a+b$ of $2,4,6,10$ with $a>b$. In particular, $\Gr^W_\bullet\u_2$ is finitely presented with possibly the  relations of cubic to septic degree. 
\end{bigtheorem}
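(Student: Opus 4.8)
The plan is to follow Hain's strategy for $g\geq 3$ (\cite{hain4}), adapting it to the special geometry of genus two. The minimal generators and relations of $\Gr^W_\bullet\u_2$ are its Lie-algebra homology groups $H_1$ and $H_2$, and these are governed by the cohomology of $\G_2$ with symplectic coefficients. For the extension $1\to\U_2\to\cG_2\to\Sp_2\to 1$, reductivity of $\Sp_2$ degenerates the Hochschild--Serre spectral sequence to natural isomorphisms $H^q(\cG_2,V_\lambda)\cong(H^q(\u_2)\otimes V_\lambda)^{\Sp_2}$, and Hain's comparison theorem (\cite{hain3}) identifies these with $H^q(\G_2,V_\lambda)=H^q(\M_2,\V_\lambda)$, an isomorphism for $q\leq 1$ and an injection for $q=2$. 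As every term is $\Sp_2$-equivariant and carries Deligne's mixed Hodge structure, it suffices to compute these cohomology groups as $\Sp_2$-representations together with their weights.

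First I would pin down the generators. By the above, $\Gr^W_\bullet H_1(\u_2)$ is computed by the collection $\{H^1(\M_2,\V_\lambda)\}_\lambda$, so the task is to evaluate $H^1(\M_2,\V_\lambda)$ for every $\lambda$. The structural input special to $g=2$ is that $\Lambda^3 H\cong H$ for $\Sp_4$, whence $\Lambda^3 H/H=0$ and the first Johnson homomorphism (\cite{joh1}) vanishes; thus, unlike the cases $g\geq 3$, there are no generators in weight $-1$. The computation then shows that $H^1(\M_2,\V_\lambda)$ is supported on the single local system $\lambda=2+2$ with multiplicity one, whose weight forces $\Gr^W_{-2}H_1(\u_2)=V_{2+2}(3)$. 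This yields the surjection $\L(V_{2+2}(3))\twoheadrightarrow\Gr^W_\bullet\u_2$ of the theorem.

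Next I would bound the relations. The minimal relations form $H_2(\Gr^W_\bullet\u_2)$, and the injection $H^2(\cG_2,V_\lambda)\hookrightarrow H^2(\M_2,\V_\lambda)$ shows that the multiplicity of $V_\lambda$ there is at most $\dim H^2(\M_2,\V_\lambda)$; this inequality, rather than an equality, is precisely why the ideal $R$ is asserted only to be \emph{generated by a subrepresentation} of the stated sum. (This is the genus-two analogue of Kabanov's $H^2$-bound, \cite{kab}.) Hence I would compute $H^2(\M_2,\V_\lambda)$ as an $\Sp_2$-representation with its weights. Since the generators live in weight $-2$, a class of weight $w$ contributes only to relations of word-length $-w/2$; a summand $V_{a+b}(a+2)$ has weight $b-a-4$, hence word-length $\tfrac{1}{2}(a-b+4)\in\{3,\ldots,7\}$, i.e.\ cubic through septic.

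The computational engine, and the principal obstacle, is the explicit determination of $H^\bullet(\M_2,\V_\lambda)$. I would use the Torelli open immersion $\M_2\hookrightarrow\A_2$, whose complement is the product divisor $(\A_1\times\A_1)/S_2$, together with a smooth compactification $\Atwobar$ whose boundary involves $\AAbar$, and run the associated Gysin/excision long exact sequence. The inputs are the known cohomology $H^\bullet(\A_2,\V_{l,m})$ in terms of elliptic and Siegel modular forms (Harder's Eisenstein cohomology and Petersen's computation), the Eichler--Shimura description of $H^\bullet(\A_1,\mathrm{Sym}^k)$, and Künneth on the boundary followed by $S_2$-invariants. The values $a+b\in\{2,4,6,10\}$ and the exact Tate twists should emerge here, with the small values $2,4,6$ arising from Eisenstein cohomology of $\A_2$ and $a+b=10$ first bringing in a genuine cusp contribution through the boundary, namely the elliptic discriminant $\Delta$ (the local system $\mathrm{Sym}^{10}$, via the connecting map $H^2(\M_2,\V)\to H^1(\text{boundary},\V)(-1)$). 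The hard part will be propagating the mixed Hodge structure through the Gysin sequence precisely enough to fix every Tate twist---and hence every relation degree---and to prove that no other $\V_\lambda$, and in particular no weight $8$ class, survives in $H^2$, so that the finite list above is a genuine upper bound for $R$.
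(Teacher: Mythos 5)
Your framework is the right one and is essentially the paper's: reduce $H_1(\u_2)$ and $H_2(\u_2)$ to $H^{\leq 2}(\M_2,\V_\lambda)$ via Hain's comparison maps, and compute the latter from the Gysin sequence of the open immersion $\M_2=\A_2\setminus\D_{1,1}$, using Petersen's computation of $H^\bullet(\A_2,\V_{a+b})$, the branching formula and K\"unneth on the boundary, and weight bookkeeping (which correctly gives the degree range $(a-b+4)/2\in\{3,\dots,7\}$ and the ``subrepresentation'' caveat). But there is a genuine gap at the decisive step, which you defer to ``the computation'' and to ``propagating the mixed Hodge structure through the Gysin sequence.'' Weight considerations cannot close this gap: the boundary contributes a Tate class $H^0(\D_{1,1},\V_{2l+2l})(-1)=\Q(-2l-1)$ for \emph{every} $l\geq 1$ (a potential generator $V_{2l+2l}(2l+1)$ in each even weight) and a Tate class $\Q(-a-2)\subset H^1(\D_{1,1},\V_{a+b}(-1))$ for \emph{every} $a>b$ (a potential relation of degree $(a-b+4)/2$, which is unbounded). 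These classes have exactly the same Hodge type as the $s_{a+b+4}$ copies of the corresponding Tate structure inside $H^2(\A_2,\V_{2l+2l})$, resp.\ $H^3(\A_2,\V_{a+b})$, so no amount of MHS strictness decides whether they die in the Gysin sequence. The content of the paper is precisely the nonvanishing of these Gysin maps whenever $s_{a+b+4}>0$: Petersen's Theorem \ref{gysin map 1} for the $H^1$ (generator) step, and the new Theorem \ref{gysin2} for the $H^2$ (relation) step. Both are proved by passing to stalks at the zero-dimensional stratum of the Baily--Borel compactification via Harder's formula, identifying the map with a modular symbol, and invoking nonvanishing of $L(f,s)$ --- the central value for the generators, and $L(f,a+3)$ in the region of absolute convergence of the Euler product for the relations. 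Your proposal contains none of this, and without it neither the uniqueness of the generator $V_{2+2}(3)$ nor the finiteness of the presentation follows.

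Relatedly, your account of where the list $\{2,4,6,10\}$ comes from is backwards. These are exactly the even values of $a+b$ for which $s_{a+b+4}=0$ (weights $6,8,10,14$ carry no cusp forms for $\SL_2(\Z)$), so the Tate part of the target $H^3(\A_2,\V_{a+b})$ vanishes and the boundary class may survive into $H^2(\M_2,\V_{a+b})$. The presence of a cusp form --- e.g.\ $\Delta$ in weight $12$ for $a+b=8$, and cusp forms in every even weight $\geq 16$ for $a+b\geq 12$ --- is what \emph{removes} a value from the list, via the injectivity statement above; it is not what puts $a+b=10$ into it.
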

 This will follow immediately from Corollary \ref{generator of completion} and Corollary \ref{relations}.\\
 \indent Our approach is fundamentally different from the higher genus case by Hain. Our main result is largely due to Petersen's work \cite{pet2} on the cohomology of symplectic local systems on $\A_2$. Together with Petersen's nonvanishing theorem \cite[Thm.~5.1]{pet3} and an extension of it, we are able to determine the generators and bound the degree of relations. While the computations for $g\geq 3$ are topological in nature, our approach is arithmetic.  
 This is because, $\M_2$ being the complement of the locus $\D_{1,1}$ of the products of two elliptic curves in $\A_2$,  the cohomology of $\M_2$ is determined by that of $\D_{1,1}$ and $\A_2$ by the Gysin sequence.\\
 \indent In sections 2,3, and 4, we will review briefly the mapping class groups, the moduli stacks of smooth projective curves and principally polarized abelian varieties, and set notations for our basic objects for the case $g=2$. In section 5, we will review the theory of relative completion along its Hodge theory aspect developed by Hain and then discuss  a minimal presentation of $\Gr^W_\bullet\u_g$ for $g\geq3$. In section 6, we will determine the generators of $\Gr^W_\bullet\u_2$ and in section 7, we will prove a nonvanishing theorem for a Gysin sequence to bound the degree of relations. 
 
 {\em Acknowledgements:} I am very grateful to Dan Petersen for taking an interest in this work, answering many questions, and sharing his unpublished notes on the cohomology of the mapping class group of genus two. I am also very grateful to Richard Hain who introduced to me the study of the mapping class groups and moduli of curves. I also would like to thank Kevin Kordek for numerous meaningful discussions on this work. 

\section{Mapping class groups and Torelli groups}
Let $S_g$ be a compact oriented surface of genus $g$. Let $P$ be a subset of $S_g$ consisting of $n$ distinct points on $S_g$. The group of the orientation-preserving diffeomorphisms of $S_g$ fixing $P$ pointwise will be denoted by $\mathrm{Diff}^+(S_g, P)$, which is given open-compact topology.  The mapping class group of type $(g,n)$, dentoed by $\G_{g,n}$, is defined to be the group of path-connected components of $\mathrm{Diff}^+(S_g, P)$:
$$\G_{g,n}=\pi_0(\mathrm{Diff}^+(S_g, P)).$$
We easily see that the group $\G_{g,n}$ is independent of the choice of the reference surface $S_g$ and $P$. In this paper, we will always assume that $2g-2+n>0$. When $n=0$, $\G_{g,0}$ will be denoted by $\G_g$. \\
\indent The first integral cohomology group $H^1(S_g, \Z)$ is equipped with the cup product pairing   
$$\langle~,~ \rangle: H^1(S_g, \Z)\otimes H^1(S_g, \Z)\to H^2(S_g,\Z)\cong\Z,$$
that is a nondegenerate, skew symmetric bilinear paring. The mapping class group $\G_{g,n}$ acts on the first homology group $H_1(S_g, \Z)$ and hence on $H^1(S_g,\Z)$,  preserving the pairing $\langle~,~\rangle$.  Therefore, there is a natural homomorphism 
$$\rho:\G_{g,n}\to \Aut(H^1(S_g,\Z), \langle~,~\rangle)=\Sp(H^1(S_g,\Z))=\Sp(H_1(S_g,\Z)).$$
It is well-known that if $g\geq 1$, then the homomorphism $\rho$ is surjective. The kernel of $\rho$ is called the  Torelli group, denoted by $T_{g,n}$.  The group $T_1$ is trivial, Mess showed in \cite{mes} that $T_2$ is an infinitely generated free group , and Johnson proved in \cite{joh} that $T_g$ is finitely generated for $g\geq 3$. 
It is not known whether $T_g$ is finitely presented or not for $g\geq 3$. Hain showed in \cite{hain0} that the Lie algebra $\t_g$ of the Malcev completion of $T_g$ is finitely presented.

\section{Moduli stacks $\M_{g,n}$ and $\A_g$}
By a complex curve $C$ of genus $g$, we mean a Riemann surface of genus $g$, and a marking is an orientation-preserving diffeomorphism $f:S\to C$ to a compact complex curve.  The Teichm\"uller space of marked $n$-pointed compact curves of genus $g$ will be denoted by $\X_{g,n}$. As a set , it consists of the isotopy classes of the markings. The mapping class group $\G_{g,n}$ acts on $\X_{g,n}$ by $[\phi]:[f]\mapsto [f\circ \phi^{-1}]$. The fact that this action is properly discontinuous and virtually free imply that the quotient 
$$\M_{g,n}=\X_{g,n}/\G_{g,n}$$
 is a complex orbifold and that it is covered by smooth complex varieties. \\
 \indent Let $\h_g$ be the Siegel upper half-space. It consists of the complex $g\times g$ symmetric matrices $Z$ such that $\mathrm{Im} (Z)$ is positive definite. It is the moduli space of principally polarized abelian varieties equipped with a  choice of a symplectic basis and comes with an $\Sp_g(\Z)$-action:
 $$\alpha\cdot Z= \left[\begin{array}{cc}A& B\\C&D\end{array}\right]\cdot Z=(AZ+B)(CZ+D)^{-1}.$$
 The moduli stack $\A_g$ of principally polarized abelian varieties  is the quotient 
 $$\A_g=\h_g/\Sp_g(\Z).$$
It is considered as a complex orbifold. In Section \ref{bound}, we will also view it as a locally symmetric space for $\Sp_g(\mathbb{R})$ and consider its Baily-Borel compactification and the boundary cohomology  at the zero-dimensional stratum. \\
\indent The Torelli map $\M_g\to \A_g$ as stacks are a 2-to-1 map ramified along the hyperelliptic locus, $\cH_g$, for $g\geq3$ and it is an open immersion for $g=2$. For $g=1$, the Torelli map $\M_{1,1}\to\A_1$ is an isomorphism of stacks. In this paper, we especially pay our attention to the case $g=2$. Via the Torelli map $i:\M_2\to \A_2$, we may consider $\M_2$ as an open substack of $\A_2$. The complement in $\A_2$ of $\M_2$ is a smooth divisor consisting of the unordered products of two elliptic curves, which we will denote by $\D_{1,1}$.

\section{Local systems over $\M_2$, $\A_2$, and $\D_{1,1}$}
Let $f:\Y\to \A_2$ be the universal family over $\A_2$.  Let $\V$ be  the sheaf $\mathrm{R}^1f_\ast\Q$  over $\A_2$.  That $f$ is smooth and proper implies that $\V$ is a polarized variation of Hodge structure of weight $1$. Let $x$ be in $\A_2$ and $A$ be the fiber over $x$.   The monodromy action of $\pi_1^\orb(\A_2,x)$  on $H^1(A, \Q)$ corresponds to the inclusion 
$$\rho_x:\pi_1^\orb(\A_2,x)=\Sp_2(\Z)\to \Sp_2(\Q).$$
 A finite dimensional $\Sp_2(\Q)$-representation $M$ defines a local system $\MM$ over $\A_2$ via the homomorphism $\rho_x$. In fact, the local system $\MM$ underlies a variation of Hodge structure. \\
\indent We will briefly review the $\Sp_2$-representation theory. We consider $\Sp_2$ as a linear algebraic group over $\Q$. \indent The maximal torus $T$ of $\Sp_2$ consists of the diagonal matrices $t$ of the form $\mathrm{diag}(t_1,t_2,t_1^{-1},t_2^{-1})$ and the fundamental dominant weights $\lambda_j(t):T\to \mathbb{R}$ are given by 
$$\lambda_1(t)=t_1\text{ and }\lambda_2(t)=t_1t_2.$$
Then there is a bijection between the positive integral linear combinations $\alpha=n_1\lambda_1+n_2\lambda_2$  and the set of isomorphism classes of irreducible $\Sp_2$-representations $M_{[n_1,n_2]}$ with highest weight $\alpha$.\\
\indent We may also produce irreducible representations by Weyl's construction (see \cite{FH}).
Let $V$ be a 4-dimensional vector space over $\Q$ equipped with a nondegenerate, skew symmetric bilinear form $\V\otimes\V\to \Q$.  For a partition $\lambda: n=a+b$ of an nonnegative integer $n$ with $a\geq b\geq 0$, there is an irreducible representation of $\Sp_2$, which we denote by $V_{a+b}$. The representation $V_{a+b}$ can be realized as a summand in $V^{\otimes n}$ which is specified by contraction maps and the Schur functor corresponding to the partition $\lambda$. The irreducible representation $V_{a+b}$ corresponds to $M_{[a-b,b]}$. In this paper, we will use the partition notation. \\ 
\indent Note that there is an isomorphism of local systems $\V\cong \V_1$.  Moreover, we obtain the local system $\V_{a+b}$ corresponding to the irreducible representation $V_{a+b}$. As $\V$ is a variation of Hodge structure of weight $1$,  Weyl's construction implies that $\V_{a+b}$ carries a variation of Hodge structure of weight $n=a+b$. By pulling back, we also obtain variations of Hodge structures over $\M_2$ and $\D_{1,1}$, which we will also denote by $\V_{a+b}$. Therefore, the cohomology  $H^\bullet(X, \V_{a+b})$ with $X\in\{\M_2, \A_2, \D_{1,1}\}$ carry natural Mixed Hodge structures, and so do the compactly supported cohomology $H^\bullet_c(X, \V_{a+b})$. We will abbreviate Hodge structure, Mixed Hodge structure, and variations of them by HS, MHS, VHS, and VMHS, respectively.\\

We will need to understand the restrictions of $\V_{a+b}$ to $\D_{1,1}$. As an orbifold, $\D_{1,1}$ is the quotient 
$$\D_{1,1}= \h_1\times\h_1/(\SL_2(\Z)\times\SL_2(\Z)\rtimes S_2),$$ 
where $S_2$ interchanges the components of $\SL_2\times \SL_2$. The space $\h_1\times\h_1$ is considered to be the subspace of $\h_2$:
$$\h_1\times\h_1=\left\{\begin{bmatrix}
	z_1&0\\
	0&z_2\\
	\end{bmatrix}| \mathrm{Im}(z_i)>0, i=1,2\right\}.$$
The wreath product $\SL_2(\Z)\times\SL_2(\Z)\rtimes S_2$ embeds into $\Sp_2(\Z)$, consisting of the matrices 
$$\begin{bmatrix}
a_1&0&b_1&0\\0&a_2&0&b_2\\c_1&0&d_1&0\\0&c_2&0&d_2
\end{bmatrix}, \hspace{.5in} \begin{bmatrix}
0&a_1&0&b_1\\a_2&0&b_2&0\\0&c_1&0&d_1\\c_2&0&d_2&0
\end{bmatrix}, \hspace{.5in} \begin{bmatrix}
a_i&b_i\\c_i&d_i
\end{bmatrix}\in \SL_2(\Z)
$$

Thus the restriction of the local system $\V_{a+b}$ to $\D_{1,1}$ corresponds to the restriction of the $\Sp_2$-representation $V_{a+b}$ to the subgroup $\SL_2(\Z)\times\SL_2(\Z)\rtimes S_2$ of $\Sp_2$. So we need to know how $V_{a+b}$ decomposes as the sum of the irreducible  $\SL_2\times\SL_2\rtimes S_2$-representations. Petersen gives  a branching formula in \cite[Prop.3.4]{pet1} for this inclusion. \\
\indent Denote the standard representation of $\SL_2$ by $H$. The symmetric tensor power $\mathrm{Sym}^mH$ will be denoted by $H_m$. These are the irreducible representations of $\SL_2$. The irreducible representations of $\SL_2\times\SL_2\rtimes S_2$ are given by the following three types:
\begin{enumerate}
	\item $U_{a,b}=H_a\otimes H_b\oplus H_b\otimes H_a$, $\sigma: u\otimes v+ v'\otimes u'\mapsto u'\otimes v'+ v\otimes u$, with $a\not=b$,
	\item $U_a^+=H_a\otimes H_a$, $\sigma: u\otimes v\mapsto v\otimes u$,
	\item $U_a^-=H_a\otimes H_a$, $\sigma: u\otimes v\mapsto -v\otimes u$,
\end{enumerate}
where $S_2=\langle\sigma\rangle$. Petersen's branching formula explicitly describes the restriction $\mathrm{Res}_{\SL_2\times\SL_2\rtimes S_2}^{\Sp_2}V_{a+b}$ as the sum of the irreducible representations of the above three types. 

\begin{example} The key representations that we will consider  are $V_{2l+2l}$ with $l\geq 1$ and $V_{a+b}$ with $a+b$ even and $a>b$. The restriction $\mathrm{Res}_{\SL_2\times\SL_2\rtimes S_2}^{\Sp_2}V_{2l+2l}$ contains a single copy of the trivial representation $U^+_0$ as a summand and $\mathrm{Res}_{\SL_2\times\SL_2\rtimes S_2}^{\Sp_2}V_{a+b}$ contains a single copy of $U_{a-b,0}$. The corresponding local systems $\UU_{a,b}$, $\UU^+_a$, and $\UU^-_a$ over $\D_{1,1}$ also carry VHSs, and in order to be consistent with Hodge weights, we need to apply Tate twists. That is,  $U^+_0$ is twisted by $-2l$, i.e., $U^+_0(-2l)$ and $U_{a-b,0}$ by $-b$, $U_{a-b,0}(-b)$. 

\end{example}

As an orbifold, the moduli stack $\M_{1,1}$ is the quotient $\M_{1,1}=\h_1/\SL_2(\Z)$ with $\pi_1^\orb(\M_{1,1})=\SL_2(\Z)$. Each representation $H_m$ defines a local system $\H_m$ over $\M_{1,1}$ underlying a VHS of weight $m$. Recall that the Torelli map $\M_{1,1}\to \A_1$ is an isomorphism of stacks.  The cohomology of the local systems over $\D_{1,1}$ can be computed by pulling back them along the map $\A_1\times\A_1\to \D_{1,1}$.
\begin{remark}\label{cohomology on D11}  There are isomorphisms
	\begin{enumerate}
		\item $H^\bullet(\D_{1,1}, \UU_{a,b})\cong H^\bullet(\M_{1,1}, \H_a)\otimes H^\bullet(\M_{1,1}, \H_b)$,
		\item $H^\bullet(\D_{1,1}, \UU_{a}^+)\cong \mathrm{Sym}^2H^\bullet(\M_{1,1}, \H_a)$, and
		\item $H^\bullet(\D_{1,1}, \UU^-_a)\cong \Lambda^2 H^\bullet(\M_{1,1}, \H_a)$.
		
	\end{enumerate}
	
\end{remark}

\section{Review of relative completion of $\G_{g,n}$ and a minimal presentation of the Lie algebra $\Gr^W_\bullet\u_g$}
 A detailed treatment of the theory of relative completion can be found in \cite{hain3} and \cite{hain2}.  Here, we will summarize and state the properties of the theory needed for our main results.\\
 \indent 
Assume that $F$ is a field of characteristic zero. By an algebraic $F$-group, we mean an affine group scheme $G$ of finite type over $F$. A proalgebraic $F$-group is the projective limit of algebraic $F$-groups. \\
\indent Consider the following data
\begin{enumerate}
	\item $R$ is a reductive group over $F$, and 
	\item $\G$ is a discrete group with a Zariski-dense homomorphism 
	$\rho:\G\to R(F).$
\end{enumerate}
The relative completion of $\G$ with respect to $\rho$ is a pair ($\cG, \tilde{\rho}$) of a proalgebraic $F$-group $\cG$  that is an extension of $R$ by a prounipotent $F$-group $\U$ and a natural map $\tilde{\rho}:\G\to \cG(F)$, satisfying the universal property:
If $G$ is a proalgebraic $F$-group that is also an extension of $R$ by a prounipotent $F$-group $U$ such that $\rho$ factors through $G(F)\to R(F)$, then there exists a unique morphism of proalgebraic $F$-groups $\phi: \cG\to G$ such that the diagram
$$\xymatrix{
	  \G\ar[r]^{\tilde{\rho}}\ar[d]&\cG(F)\ar[d]\ar[ld]_\phi\\
	  G(F)\ar[r]&R(F)
}
$$ commutes. 
One of the key properties of relative completion we will need is the following. Denote the Lie algebra of $\U$ by $\u$.  For our purpose, we take $R=\Sp_g$ and $F=\Q$, and the cohomology we use here is the continuous cohomology of a discrete group and pro-Lie algebras. 
\begin{proposition}\label{key property} For each partition $\lambda$, let $V_\lambda$ be the corresponding irreducible representation of $\Sp_g$. Then
	\begin{enumerate}
		\item there is a natural $\Sp_g$-invariant isomorphism
		$$H^1(\u)\cong \bigoplus_{\lambda}H^1(\G, V_{\lambda})\otimes V_{\lambda}^\ast,$$
		and 
		\item there is a natural $\Sp_g$-invariant injection
		$$H^2(\u)\to \bigoplus_{\lambda}H^2(\G, V_{\lambda})\otimes V_{\lambda}^\ast.$$
	\end{enumerate}
\end{proposition}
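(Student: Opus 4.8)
The plan is to compute $H^\bullet(\u)$ not directly, but by routing it through the continuous cohomology of the proalgebraic group $\cG$ and then comparing with the group cohomology of $\G$. Two inputs drive this: a Hochschild--Serre argument for the defining extension $1\to\U\to\cG\to R\to1$, which rewrites $H^\bullet_\cts(\cG,-)$ in terms of $H^\bullet(\u)$; and Hain's comparison theorem relating $H^\bullet_\cts(\cG,-)$ to $H^\bullet(\G,-)$. Throughout I take $R=\Sp_g$ and, for a partition $\lambda$, regard the irreducible $V_\lambda$ as a $\cG$-module through the projection $\cG\to R$, so that $\U$ acts trivially on it.

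First I would run the Hochschild--Serre spectral sequence in continuous cohomology for $1\to\U\to\cG\to R\to1$ with coefficients in $V_\lambda$. Since $\U$ is prounipotent and the ground field has characteristic zero, its continuous cohomology agrees with Lie algebra cohomology, $H^q_\cts(\U,\Q)\cong H^q(\u)$, and as $\U$ acts trivially on $V_\lambda$ the fibre terms become $H^q(\u)\otimes V_\lambda$ as $R$-modules. The spectral sequence then reads
\[ E_2^{p,q}=H^p\big(R,\,H^q(\u)\otimes V_\lambda\big)\ \Longrightarrow\ H^{p+q}_\cts(\cG,V_\lambda). \]
The decisive point is that $R$ is reductive over a field of characteristic zero, so the category of rational $R$-modules is semisimple; hence $H^p(R,M)=0$ for $p>0$ and $H^0(R,M)=M^R$, and the spectral sequence collapses onto the column $p=0$. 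This yields a natural isomorphism
\[ H^n_\cts(\cG,V_\lambda)\cong\big(H^n(\u)\otimes V_\lambda\big)^R. \]

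Next I would extract multiplicities using Schur's lemma: for the irreducible $V_\lambda$ one has $\big(H^n(\u)\otimes V_\lambda\big)^R\cong\Hom_R\big(V_\lambda^\ast,H^n(\u)\big)$, the multiplicity space of $V_\lambda^\ast$ in $H^n(\u)$. Consequently the $V_\lambda^\ast$-isotypic component of $H^n(\u)$ is $H^n_\cts(\cG,V_\lambda)\otimes V_\lambda^\ast$, and summing the isotypic decomposition over all $\lambda$ (the assignment $\lambda\mapsto\lambda^\ast$ being a bijection on irreducibles) produces the natural $R$-equivariant identification
\[ H^n(\u)\cong\bigoplus_\lambda H^n_\cts(\cG,V_\lambda)\otimes V_\lambda^\ast. \]
It then remains only to replace $\cG$-cohomology by $\G$-cohomology in degrees $n=1,2$.

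This last replacement carries the real content and is where I expect the main obstacle to lie. The comparison map $H^i_\cts(\cG,V_\lambda)\to H^i(\G,V_\lambda)$ induced by $\tilde\rho:\G\to\cG(\Q)$ is an isomorphism for $i\le1$ and injective for $i=2$; this is a structural theorem in Hain's theory of relative completion \cite{hain3,hain2}, resting on $R$ being the Zariski closure of the image of $\rho$ together with the universal property defining $\cG$. Granting it, degree one produces the isomorphism (i) and degree two the injection (ii). The points requiring genuine care are precisely the degree-two injectivity, where one must control the higher differentials and the exact range of the comparison theorem, and the bookkeeping in the pro-category: convergence of the spectral sequence and finiteness of each multiplicity space, which is what guarantees that the displayed direct sums are the honest isotypic decompositions of $H^n(\u)$.
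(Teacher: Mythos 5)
Your proposal is correct and follows exactly the standard route underlying this statement: the Hochschild--Serre collapse for $1\to\U\to\cG\to R\to1$ using reductivity of $R$, the isotypic decomposition via Schur's lemma, and Hain's comparison theorem that $H^i_\cts(\cG,V_\lambda)\to H^i(\G,V_\lambda)$ is an isomorphism for $i\le 1$ and injective for $i=2$. The paper itself gives no proof, deferring entirely to \cite{hain3} and \cite{hain2}, and your reconstruction is precisely the argument given there, with the genuinely nontrivial input (the degree-two injectivity of the comparison map) correctly identified and attributed.
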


We take the relative completion of $\G_{g,n}$ with respect to the homomorphism $\rho: \G_{g,n}\to \Sp(H^1(S_g, \Q))$. Since the image of $\rho$ is $\Sp(H^1(S_g, \Z))$, $\rho$ has a Zariski-dense image. Denote the completion of $\G_{g,n}$ by $\cG_{g,n}$ and its prounipotent radical by $\U_{g,n}$. The Lie algebras of $\cG_{g,n}$ and $\U_{g,n}$ will be denoted by $\g_{g,n}$ and $\u_{g,n}$. \\
\indent For each $x$ in $\M_{g,n}$,  there is  a monodromy representation 
$$\rho_x:\pi_1^\orb(\M_{g,n}, x)\to \Sp(H^1(C, \Q)),$$
where $C$ is the fiber of the universal family over the point $x$. The natural isomorphism $\G_{g,n}\cong \pi_1^\orb(\M_{g,n},x)$ identifies this monodromy action with the homomorphism $\rho$. Denote the completion of $\rho_x$ by $\cG_{g,n}(x)$ and its prounipotent radical by $\U_{g,n}(x)$. Their Lie algebras are denoted by $\g_{g,n}(x)$ and $\u_{g,n}(x)$, respectively. The following result in \cite{hain3} implies that for each choice of $x$, the Lie algebras $\g_{g,n}(x)$ and $\u_{g,n}(x)$ carry canonical $\mathbb{R}$-MHSs.
\begin{theorem}[Hain]\label{hs on relative completion}
	Suppose that $X$ is a smooth complex variety and that $\V$ is a polarized variation of $\Q$-HS over $X$ of geometric origin. If the monodromy action 
	$$\rho_x:\pi_1(X,x)\to \Aut(\V_x, \langle~,~\rangle)$$ has a Zariski-dense image, then the Lie algebra $\g$ of the completion of $\pi_1(X,x)$ and the pronilpotent Lie algebra $\u$ of the prounipotent radical of the completion admit natural $\Q$- MHSs, where brackets are morphisms of MHSs.
\end{theorem}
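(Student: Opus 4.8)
The plan is to construct a natural ind-mixed Hodge structure on the coordinate ring $\O(\U)$ of the prounipotent radical, compatible with its Hopf-algebra structure, and then to dualize. By Tannakian duality a representation of $\cG$ is an iterated extension on $X$ of the local systems $\V_\lambda$ underlying the $\Sp_g$-irreducible summands of $\V^{\otimes\bullet}$; these are exactly the admissible variations of MHS on $X$ whose weight-graded quotients are sums of the $\V_\lambda$. Thus the Hodge theory should be read off from the mixed Hodge complexes computing the cohomologies $H^\bullet(X,\V_\lambda)$, and the key point is to propagate these structures through a relative bar construction.

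First I would fix a smooth compactification $\bar{X}$ with $D=\bar{X}\setminus X$ a normal crossings divisor, take Deligne's canonical extensions of the $\V_\lambda$, and form the logarithmic de Rham complexes $\Omega^\bullet_{\bar{X}}(\log D)\otimes\bar{\V}_\lambda$. Because $X$ is smooth and $\V$ is a polarized VHS of geometric origin, the theory of admissible variations (equivalently Saito's mixed Hodge modules) equips each $H^\bullet(X,\V_\lambda)$ with a functorial MHS, and the cup products together with the $\Sp_g$-equivariant contraction maps $\V_\lambda\otimes\V_\mu\to\V_\nu$ are morphisms of MHS. Following Hain, I would then assemble a multiplicative mixed Hodge complex $A$ with coefficients in the local system $\bigoplus_\lambda\V_\lambda\otimes V_\lambda^\ast$ attached to the coordinate ring $\O(R)$ of $R=\Sp_g$, so that $H^\bullet(A)=\bigoplus_\lambda H^\bullet(X,\V_\lambda)\otimes V_\lambda^\ast$ in the spirit of Proposition \ref{key property}, and form its reduced bar complex $B(A)$. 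Since the bar differential is built entirely from the product and the coefficient pairings, all of which are MHS morphisms, $B(A)$ is again a mixed Hodge complex; a relative de Rham theorem identifying $H^0(B(A))$ with $\O(\U)$ then transports the ind-MHS to the coordinate ring. The shuffle product and the deconcatenation coproduct on $B(A)$ are morphisms of MHS, so $\O(\U)$ becomes an ind-Hopf algebra in the category of MHS. Dualizing yields the MHS on $\u$ with Lie bracket a morphism of MHS, and adjoining the pure weight-zero Hodge structure on the Lie algebra $\r=\mathrm{Lie}(R)$ of the reductive quotient gives the MHS on $\g$, an extension of $\r$ by $\u$. Naturality and independence of the choice of $\bar{X}$ follow from the uniqueness and functoriality of Deligne's MHS together with the universal property of relative completion.

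The main obstacle is verifying that the bar construction genuinely underlies a mixed Hodge complex whose weight and Hodge filtrations are compatible with all the Hopf-algebra operations in the present setting of \emph{nontrivial} coefficient systems. In Morgan's unipotent case the coefficients are trivial and one needs only the multiplicative MHS on $H^\bullet(X,\Q)$; here the local systems $\V_\lambda$ force one to control the weight filtration on cohomology with coefficients, its interaction with the $\Sp_g$-decomposition, and the strictness of the cup and contraction pairings as morphisms of MHS. Establishing this strictness, so that the filtrations pass correctly to $B(A)$ and then to the dual Lie algebra, is the technical heart of the argument and is precisely where the hypotheses that $X$ be smooth and $\V$ be a polarized VHS of geometric origin are essential.
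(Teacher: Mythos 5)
The paper does not prove this theorem: it is quoted from Hain \cite{hain3}, and the only original content here is the remark immediately following it. So the comparison is really between your sketch and Hain's published argument. On that score your outline is faithful to Hain's strategy: one builds a multiplicative mixed Hodge complex computing $\bigoplus_\lambda H^\bullet(X,\V_\lambda)\otimes V_\lambda^\ast$ (Hain works with a de Rham complex of forms with coefficients in the coordinate ring of a flat principal $R$-bundle rather than literally with $\O(R)$, but this is the same object), applies the reduced bar construction, identifies $H^0$ of the bar construction with $\O(\U)$ via a relative de Rham theorem, checks that shuffle product and deconcatenation are filtered morphisms, and dualizes.

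There is, however, one genuine gap, and it is exactly the point the paper flags in the remark after the theorem: your proposal asserts a $\Q$-MHS throughout, but Hain's construction as you have described it only produces an $\mathbb{R}$-MHS. The weight filtration on the bar construction is defined over $\Q$, but the Hodge filtration and the splittings used to compare the de Rham and Betti sides rely on the polarization and on real-analytic data, so the mixed Hodge complex one obtains is a priori an $\mathbb{R}$-mixed Hodge complex. Descending the resulting $\mathbb{R}$-MHS on $\u$ and $\g$ to a $\Q$-MHS is a separate step, valid because the Zariski closure of the monodromy image ($\Sp_g$ here) is defined over $\Q$; the paper cites \cite[4.3]{hain5} for this. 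Your argument as written never confronts this, and "the theory of admissible variations equips each $H^\bullet(X,\V_\lambda)$ with a functorial MHS" does not by itself transport a rational structure through the bar construction, because the issue is not the coefficients of the individual cohomology groups but the rationality of the comparison quasi-isomorphisms in the multiplicative mixed Hodge complex. A secondary, smaller imprecision: the identification of representations of $\cG$ with admissible variations whose weight gradeds are sums of the $\V_\lambda$ is a theorem proved \emph{after} the MHS is constructed, not an input to the construction, so it should not be used as the starting point of the proof.
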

\begin{remark}
	The original statement is stated over $\mathbb{R}$. However, if the Zariski-closure of the image of $\rho_x$ is defined over $\Q$, the $\mathbb{R}$-structure canonically lifts to a $\Q$-structure. For a concrete explanation, see  \cite[4.3]{hain5}. 
\end{remark}
Fix a base point $x$ in $\M_{g,n}$. We will omit the reference to the base point $x$ from now on. Now, we review the presentations of the pronilpotent Lie algebras $\u_{g}$ for $g\geq 3$. We will need the following standard fact.
\begin{proposition} Let $ \n$ be a negatively graded Lie algebra. Let $\f$ be the free Lie algebra generated by $H_1(\n)$, i.e., $\f=\L(H_1(\n))$. Then there is an injection of graded Lie algebras $\gamma: H_2(\n)\to [\f,\f]$ such that the composition $H_2(\n)\to [\f,\f]\to\Lambda^2H_1(\n)$ is the dual of the cup product and 
	$$\n\cong \f/\langle \mathrm{im}\gamma\rangle.$$
	\qed
\end{proposition}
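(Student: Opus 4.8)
The plan is to prove this standard structural fact about negatively graded Lie algebras by relating the minimal presentation of $\n$ directly to its homology, using the bar construction (or equivalently the Chevalley--Eilenberg complex). First I would fix notation: since $\n$ is negatively graded, each graded piece $\n_{-d}$ is finite-dimensional (this is implicit in the setting, so that duals and homology behave well), and I write $H_\bullet(\n)$ for the Lie algebra homology $H_\bullet(\n) = \Tor^{U\n}_\bullet(\Q,\Q)$, computed by the Chevalley--Eilenberg chain complex $\Lambda^\bullet \n$ with the usual differential. The grading on $\n$ induces a grading on all of $H_\bullet(\n)$, and the key low-degree identifications are $H_0(\n)=\Q$, $H_1(\n)=\n/[\n,\n]=\n^{\mathrm{ab}}$, and $H_2(\n)$ sitting in the exact sequence coming from the differential $\Lambda^3\n \to \Lambda^2\n \to \n$. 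The geometry of the proof is that $H_1(\n)$ controls generators and $H_2(\n)$ controls relations.

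Next I would construct the free Lie algebra $\f = \L(H_1(\n))$ and a surjection $\f \twoheadrightarrow \n$. Choosing a graded section $s: H_1(\n)\to \n$ of the projection $\n \to \n^{\mathrm{ab}}=H_1(\n)$, the universal property of the free Lie algebra extends $s$ to a homomorphism of graded Lie algebras $\pi:\f \to \n$. Because $s$ hits a generating set modulo $[\n,\n]$, and $\n$ is negatively graded (so the lower central series is exhausted degreewise), a standard induction on degree shows $\pi$ is surjective; this is where negative grading is essential, since it guarantees that generators in $H_1$ suffice. Let $\r = \ker\pi$, so $\n \cong \f/\r$, and I must show $\r$ is generated as an ideal by the image of a map $\gamma: H_2(\n)\to [\f,\f]$ with the stated cup-product property.

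The heart of the argument is identifying the minimal generators of the ideal $\r$, i.e.\ $\r/[\f,\r]$, with $H_2(\n)$. Here I would compare the homology of $\n$ with that of $\f$. Since $\f$ is free, $H_i(\f)=0$ for $i\ge 2$ and $H_1(\f)=H_1(\n)$ by construction. Applying the five-term exact sequence (or the Hochschild--Serre spectral sequence) to the extension $0\to \r \to \f \to \n \to 0$ of Lie algebras yields
$$
H_2(\f) \to H_2(\n) \to (\r/[\f,\r]) \to H_1(\f)\to H_1(\n)\to 0.
$$
The last map is an isomorphism by construction, and $H_2(\f)=0$, so this collapses to an isomorphism $H_2(\n)\xrightarrow{\ \sim\ } \r/[\f,\r]$. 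Choosing a graded splitting of $\r \to \r/[\f,\r]$ gives the injection $\gamma: H_2(\n)\hookrightarrow \r \subseteq [\f,\f]$ whose image generates $\r$ as an ideal, establishing $\n\cong \f/\langle\im\gamma\rangle$. (That $\im\gamma$ lands in $[\f,\f]$ rather than all of $\f$ reflects that the relations are decomposable, consistent with minimality.) Finally I would verify the compatibility with the cup product: the composite $H_2(\n)\to \r/[\f,\r]\to \Lambda^2 H_1(\n)$ is induced by the leading ``quadratic part'' of each relation, which is exactly the component of the Chevalley--Eilenberg differential $\Lambda^2\n \to \n$ dualized; tracing the identifications shows this is dual to the cup product $H^1(\n)\otimes H^1(\n)\to H^2(\n)$.

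The main obstacle I anticipate is making the spectral-sequence comparison fully rigorous in the \emph{graded} and \emph{profinite/prounipotent} setting, rather than for an ordinary finite-dimensional Lie algebra: one must check that the five-term sequence is a sequence of graded vector spaces compatible with the gradings (straightforward, since all maps are degree-preserving) and that the section splitting $\r \to \r/[\f,\r]$ can be chosen degreewise. A secondary subtlety is the identification of the quadratic leading term with the dual cup product, which requires carefully matching the sign and duality conventions between Lie algebra homology $H_2(\n)$ and the continuous cohomology cup product appearing in Proposition~\ref{key property}; this is a bookkeeping matter but is where errors most easily creep in.
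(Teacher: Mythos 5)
The paper states this proposition as a standard fact and supplies no proof (the \textsc{qed} box is immediate), so there is nothing internal to compare against; your argument --- surjectivity of $\f\to\n$ by induction on the grading, the five-term exact sequence $H_2(\f)\to H_2(\n)\to \r/[\f,\r]\to H_1(\f)\to H_1(\n)\to 0$ for the extension $0\to\r\to\f\to\n\to 0$ together with $H_2(\f)=0$ for a free Lie algebra, and identification of the quadratic leading term with the dual of the cup product via the Chevalley--Eilenberg differential --- is exactly the standard proof this result is meant to invoke, and it is correct. The only points worth making explicit in a write-up are the ones you already flag: degreewise finite-dimensionality so that the dualization is harmless, and the graded Nakayama-type induction showing that a section of $\r\to\r/[\f,\r]$ has image generating $\r$ as an ideal, both of which use the negative grading.
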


The Lie algebra $\u_{g}$ is pronilpotent and hence graded by bracket. It is also graded by the weight filtration. For $g\geq 3$, it follows from the Johnson's work \cite{joh1} that $H_1(\u_{g})$ is a $\Q$-HS of weight $-1$, which together with the strictness of morphisms of MHSs implies that the weight filtration agrees with its lower central series (for a proof, see \cite[Lem. 4.7]{hain0}). The induced weight filtration on $\L(H_1(\u_{g}))$ is also the lower central series. The associated graded Lie algebra $\Gr^W_\bullet\u_{g}$ has a minimal presentation
$$\Gr^W_\bullet\u_{g}\cong \L(H_1(\u_{g}))/\langle \Gr^W_\bullet H_2(\u_{g})\rangle$$
as a graded Lie algebra in the category of the $\Sp_g$-representations. 
The key points for determining this presentation are the Johnson's work on the abelianization of the Torelli group $T_{g,1}$ for determining $H_1(\u_g)$ and Kabanov's purity theorem \cite{kab} for bounding $H_2(\u_g)$:
\begin{theorem}[Kabanov] Let $V_\lambda$ be an irreducible rational representation of $\Sp_g$ and $\V_\lambda$ be the corresponding VHS on $\M_g$.  Then the weights of the MHS $H^2(\M_g, \V_\lambda)$ lie in $\{2+|\lambda|,3+|\lambda|\}$ for $3\leq g <6$ and the weight is equal to $2+|\lambda|$ for $g\geq 6$, where $|\lambda|$ denotes the weight of $\V_\lambda$.  
\end{theorem}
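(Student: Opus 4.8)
The plan is to realize the mixed Hodge structure on $H^2(\M_g,\V_\lambda)$ through a good compactification and to read off the weights from the associated weight spectral sequence, thereby reducing the statement to a vanishing problem on the boundary strata. The local monodromy of $\V_\lambda$ around each boundary divisor is a Schur functor applied to the symplectic transvection coming from a Dehn twist about a vanishing cycle, hence unipotent; so $\V_\lambda$ admits a canonical extension and Deligne's theory applies. To avoid stacky subtleties I would first pass to a smooth variety $X$ finite over a level cover of $\Mbar_g$, so that its boundary $D$ is a genuine normal crossings divisor, and recover $\M_g$ at the end by taking invariants under the finite deck group on $\Q$-cohomology. The general lower bound for smooth varieties with pure weight-$|\lambda|$ coefficients already forces all weights of $H^2$ to be $\ge 2+|\lambda|$, so only the upper bounds are at issue.

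Next I would invoke the weight spectral sequence with variation coefficients, available through Saito's mixed Hodge modules (or Steenbrink--Zucker in this geometric setting). Writing $D^{(p)}$ for the normalization of the $p$-fold intersection locus of $D$, its $E_1$-page expresses $\Gr^W_\bullet H^2$ in terms of the cohomology of the strata, and degree/weight bookkeeping shows that
$$\Gr^W_{2+|\lambda|}H^2(\M_g,\V_\lambda),\quad \Gr^W_{3+|\lambda|}H^2(\M_g,\V_\lambda),\quad \Gr^W_{4+|\lambda|}H^2(\M_g,\V_\lambda)$$
are subquotients of, respectively, the pure part coming from $X$, of $\bigoplus_i H^1(D_i,\V_\lambda|_{D_i})(-1)$ from the codimension-one strata, and of $\bigoplus_{i<j} H^0(D^{(2)}_{ij},\V_\lambda|)(-2)$ from the codimension-two strata. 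Kabanov's theorem is then equivalent to two vanishing assertions: the weight $4+|\lambda|$ term dies for every $g\ge 3$, and the weight $3+|\lambda|$ term dies once $g\ge 6$.

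To control the strata I would use the geometry of the boundary: the divisor $\Delta_0$ has normalization $\M_{g-1,2}$, the divisor $\Delta_i$ ($1\le i\le \lfloor g/2\rfloor$) has normalization $\M_{i,1}\times\M_{g-i,1}$, and the deeper strata are built analogously from products of lower moduli. Restricting $\V_\lambda$ to such a stratum and decomposing via the branching of $\Sp(H^1)$ to the monodromy group of the stratum (a product of smaller symplectic groups together with the transvection unipotents) writes $\V_\lambda|$ as a sum of external tensor products of symplectic local systems on the factors, with Tate pieces from the vanishing cycles; a Künneth decomposition then reduces $H^0$ and $H^1$ of each stratum to $H^{\le 1}$ of lower-genus moduli with symplectic coefficients. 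For the weight $4+|\lambda|$ vanishing the point is that $H^0(\text{stratum},\V_\lambda|)$ is a space of monodromy invariants, nonzero only on summands restricting to the trivial system, and on these one checks the Gysin $d_1$-differential into the codimension-one terms is injective, so nothing survives to $E_2$. For the weight $3+|\lambda|$ vanishing I would feed in the known (non)vanishing of $H^1(\M_h,\V_\mu)$ with nontrivial $\mu$, which holds only for small $h$ in the range governed by Johnson's homomorphism and Harer stability, and I expect the numerics to force all contributing $H^1(D_i,\V_\lambda|)$ to vanish precisely when $g\ge 6$.

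The main obstacle is this last step: carrying out the branching of $\V_\lambda$ to every boundary stratum and then establishing the sharp vanishing range for $H^{\le 1}$ of the lower-dimensional moduli with the resulting coefficients, while tracking how the threshold $g\ge 6$ emerges. This is where the explicit branching and Clebsch--Gordan rules must be matched against the stability and vanishing theorems for twisted cohomology of $\M_h$, and it is also where the weaker conclusion for $3\le g<6$ — allowing a genuine weight $3+|\lambda|$ contribution — comes from. Everything preceding it is essentially formal once the weight spectral sequence and the normal-crossings model are in place.
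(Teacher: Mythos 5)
First, a point of orientation: the paper does not prove this statement at all---it is imported verbatim from Kabanov's thesis \cite{kab} and used as a black box (the only related argument the paper carries out itself is the $g=2$ analogue, Proposition \ref{weights of the second cohomology g=2}, which goes through the Torelli embedding $\M_2\hookrightarrow\A_2$ and Petersen's computations rather than through a normal crossings compactification of $\M_g$). So the only thing to assess is the logical completeness of your outline, and there is a genuine gap: the entire content of the theorem is concentrated in the two vanishing assertions you defer. The reduction to the weight spectral sequence and the lower bound $\geq 2+|\lambda|$ are formal, as you say; but the claims that $\Gr^W_{4+|\lambda|}H^2$ vanishes for all $g\geq 3$ and that $\Gr^W_{3+|\lambda|}H^2$ vanishes for $g\geq 6$ are exactly what must be proved, and your text offers only ``one checks the Gysin $d_1$-differential is injective'' and ``I expect the numerics to force'' the $H^1$ of the boundary strata to die for $g\geq 6$. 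Nothing in the proposal explains where the threshold $6$ comes from; that requires concrete input on $H^{\leq 1}$ of lower-genus moduli with the branched coefficient systems (e.g.\ the nonvanishing of $H^1(\M_h,\V_\mu)$ for the specific $\mu$ produced by branching, in the range governed by the Johnson homomorphism and stability), and until that is supplied the proof does not exist.

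Second, the identification of the $E_1$-terms is too naive as written. The local system $\V_\lambda$ does not extend over the compactification (even after a level cover), so expressions such as $H^1(D_i,\V_\lambda|_{D_i})$ are not defined; the correct $E_1$-terms involve the graded pieces of the monodromy weight filtration on the nearby cycles along each stratum, and these graded pieces are direct sums of lower-genus symplectic local systems with various Tate twists, not a single restriction. This changes both the coefficient systems and their weights, and it is precisely in this bookkeeping (branching of $V_\lambda$ from $\Sp_g$ to the monodromy group of each stratum) that the whole case analysis lives. There is also the standard but nontrivial point that the level covers of $\Mbar_g$ need not have normal crossings boundary without further modification. None of this is necessarily fatal---the blueprint is a reasonable way to attack the theorem---but as it stands it is a plan rather than a proof, with the decisive steps asserted rather than established.
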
	
In Proposition \ref{weights of the second cohomology g=2}, we will extend Kabanov's result to the case when $g=2$. 
The relation between the MHSs $H_\bullet(\u_g)$ and $H^\bullet(\M_g, \V)$ for low degrees are given by
\begin{proposition}[{\cite[Prop.7.1 \& 7.3]{hain0}}]Suppose that $g\geq 1$. If $\V$ is the VHS over $\M_{g,n}$ corresponding to an irreducible rational $\Sp_g$-representation $V$, then for each $k$, there is a morphism of MHSs
	$$\Hom_{\Sp_g}(H_k(\u_{g,n}), V)\to H^k(\M_{g,n}, \V),$$
which is an isomorphism for $k=0,1$ and an injection for $k=2$. 
\end{proposition}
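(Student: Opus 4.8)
The plan is to obtain this by applying the exact functor $M\mapsto (M\otimes V)^{\Sp_g}$ to the statements of Proposition~\ref{key property}, after identifying group cohomology with the cohomology of $\M_{g,n}$ and Lie algebra cohomology with the dual of Lie algebra homology.

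I would begin with two identifications. First, since Teichm\"uller space $\X_{g,n}$ is contractible and $\G_{g,n}$ acts on it properly discontinuously with finite stabilizers, $\M_{g,n}$ is a rational $K(\G_{g,n},1)$; hence for every irreducible $\Sp_g$-representation $V$ with associated VHS $\V$ there is a natural isomorphism $H^k(\M_{g,n},\V)\cong H^k(\G_{g,n}, V)$, compatibly with mixed Hodge structures (the finite isotropy contributes nothing over $\Q$). Second, for the pronilpotent Lie algebra $\u=\u_{g,n}$ continuous Lie algebra cohomology is canonically dual to homology, $H^k(\u)\cong H_k(\u)^\ast$, as mixed Hodge structures, using the MHS on $\u$ from Theorem~\ref{hs on relative completion}. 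Because $\Sp_g$ is reductive and the relative completion splits as $\cG_{g,n}=\U_{g,n}\rtimes\Sp_g$, the group $\Sp_g$ acts on $H^k(\u)$ and $H_k(\u)$ through morphisms of MHS, and the functor $M\mapsto (M\otimes V)^{\Sp_g}$ is exact and internal to the category of MHS.

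The heart of the argument is the canonical identification, for each irreducible $V=V_\mu$,
$$\Hom_{\Sp_g}(H_k(\u), V_\mu)\cong \bigl(H^k(\u)\otimes V_\mu\bigr)^{\Sp_g},$$
which follows from $\Hom(A,B)\cong A^\ast\otimes B$ together with $H^k(\u)\cong H_k(\u)^\ast$. I would then apply $(-\otimes V_\mu)^{\Sp_g}$ to the maps of Proposition~\ref{key property}. On the right-hand sides one has $\bigl(V_\lambda^\ast\otimes V_\mu\bigr)^{\Sp_g}\cong\Hom_{\Sp_g}(V_\lambda, V_\mu)$, which is $\Q$ when $\lambda=\mu$ and $0$ otherwise, so the direct sums collapse to $H^k(\G_{g,n}, V_\mu)$. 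Since the functor is exact, the isomorphism of Proposition~\ref{key property} yields an isomorphism for $k=1$ and the injection yields an injection for $k=2$; composing with the identification of the first paragraph gives the desired map into $H^k(\M_{g,n},\V)$. For $k=0$ one argues directly: $H_0(\u)=\Q$ is the trivial module, so the left-hand side is $V_\mu^{\Sp_g}$, while $H^0(\M_{g,n},\V)=V_\mu^{\G_{g,n}}$ equals $V_\mu^{\Sp_g}$ by Zariski-density of the monodromy; both are $\Q$ for the trivial representation and $0$ otherwise.

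I expect the main obstacle to be the Hodge-theoretic bookkeeping: one must check that every identification above is a morphism of MHS and not merely of $\Sp_g$-representations. This rests on the comparison map $H^\bullet_{\mathrm{cts}}(\cG_{g,n},\V)\to H^\bullet(\G_{g,n}, V)$ underlying Proposition~\ref{key property} being strict for the weight filtration, on the operations $\Hom(-,-)$, duality, and formation of $\Sp_g$-isotypic components being exact and weight-preserving in the category of MHS, and on the fact that $\Sp_g$ acts by Hodge morphisms so that the splitting $\cG_{g,n}=\U_{g,n}\rtimes\Sp_g$ is compatible with the Hodge and weight filtrations. Granting the Hodge theory of relative completion developed in \cite{hain3}, these verifications, while delicate, are routine, and the remaining content is the exact-functor manipulation above.
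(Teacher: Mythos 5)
The paper offers no proof of this statement---it is quoted directly from \cite[Prop.~7.1 \& 7.3]{hain0}, with only a remark explaining why Hain's hypothesis $g\geq 3$ can be relaxed to $g\geq 1$---but your derivation is correct and is essentially the argument behind the cited result: identify $H^k(\M_{g,n},\V)$ with $H^k(\G_{g,n},V)$ via the orbifold $K(\pi,1)$ property, identify $\Hom_{\Sp_g}(H_k(\u),V)$ with $(H^k(\u)\otimes V)^{\Sp_g}$ by duality, and apply the exact functor $(-\otimes V)^{\Sp_g}$ to Proposition~\ref{key property}, collapsing the sums by Schur's lemma. Your argument also makes visible why the statement holds for all $g\geq 1$ once the canonical MHS on $\u_{g,n}$ is available over $\Q$, which is precisely the content of the paper's accompanying remark.
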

\begin{remark}
	In \cite[Prop.~7.3]{hain0}, the result is stated for the case when $g\geq 3$, but since the canonical $\mathbb{R}$-MHS on $\u_{g,n}$ lifts to a $\Q$-MHS, the result can be stated for $g\geq1$ as well. 
\end{remark}
From this, it follows that $\Gr^W_\bullet\u_g$ has possibly quadratic and cubic relations for $3\leq g< 6$, and only quadratic relations for $g\geq 6$. In \cite{hain0}, Hain determined all quadratic relations for $g\geq 4$, and in \cite{hain4} showed that for $g\geq 4$,  $\Gr^W_\bullet\u_g$ is quadratically presented and determined the quadratic and cubic relations for $g=3$. Hain used the fact that $\Gr^W_\bullet\u_3$ is not quadratically presented to show that $\G_3$ does not arise as a fundamental group of a compact K\"ahler manifold.
\begin{theorem}[Hain]
	\begin{enumerate}
		\item If $g\geq 3$, then 
		$$H_1(\u_g)\cong V_{1+1+1}.$$
		\item There are isomorphisms
		$$\Gr^W_{-2}H_2(\u_g)\cong \left\{
		\begin{array}{ll}
		V_0 & g=3 \\
		\text{the $\Sp_g$-complement of } V_{2+2} \text{ in }\L_2(V_{1+1+1})& g\geq 4, \\
		\end{array} 
		\right. 
				$$
				and
			$$\Gr^W_{-3}H_2(\u_g)\cong \left\{
			\begin{array}{ll}
			\text{the $\Sp_g$-complement of } V_{3+1+1} \text{ in }\L_3(V_{1+1+1}) & g=3 \\
			0& g\geq 4, \\
			\end{array} 
			\right. 
						$$
						where $\L_k(V)$ denotes the $k$th component of the free Lie algebra $\L(V)$. 
	\end{enumerate}
	Note that Tate twists are omitted from the right-hand terms.
\end{theorem}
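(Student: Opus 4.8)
The plan is to prove both parts through the minimal presentation of $\Gr^W_\bullet\u_g$ as a graded Lie algebra: part (i) pins down the generators $H_1(\u_g)$ and part (ii) pins down the relations $\Gr^W_\bullet H_2(\u_g)$. Since $g\geq3$, Johnson's work \cite{joh1} makes $H_1(\u_g)$ pure of weight $-1$, so by strictness the weight filtration on $\u_g$ coincides with its lower central series and the Proposition on negatively graded Lie algebras gives
$$\Gr^W_\bullet\u_g\cong\L(H_1(\u_g))/\langle\Gr^W_\bullet H_2(\u_g)\rangle.$$
Throughout I would pass between Lie algebra homology and the cohomology of local systems using the two comparison results recalled above: Proposition \ref{key property} and the isomorphism/injection $\Hom_{\Sp_g}(H_k(\u_g),V_\lambda)\to H^k(\M_g,\V_\lambda)$, valid for $k\leq1$ and an injection for $k=2$.

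For part (i) I would invoke Johnson's computation of the abelianization of the Torelli group \cite{joh1}, which for $g\geq3$ gives $H_1(T_g,\Q)\cong V_{1+1+1}$; as recalled in the excerpt this determines $H_1(\u_g)\cong V_{1+1+1}$, pure of weight $-1$, so the free Lie algebra on the generators is $\L(V_{1+1+1})$. Equivalently, through $\Hom_{\Sp_g}(H_1(\u_g),V_\lambda)\cong H^1(\M_g,\V_\lambda)$ this says $H^1(\M_g,\V_\lambda)$ is nonzero exactly for $\lambda=1+1+1$. The purity is precisely what legitimizes the presentation used in the first paragraph.

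For part (ii) I would first bound where the relations can live. Via the injection $\Gr^W_\bullet H_2(\u_g)\hookrightarrow\bigoplus_\lambda H^2(\M_g,\V_\lambda)$ and Kabanov's purity theorem, the weights of $H^2(\M_g,\V_\lambda)$ lie in $\{2+|\lambda|,3+|\lambda|\}$ for $3\leq g<6$ and equal $2+|\lambda|$ for $g\geq6$. Hence $\Gr^W_\bullet H_2(\u_g)$ is concentrated in weights $-2$ and $-3$, i.e.\ in bracket degrees $2$ and $3$, and in degree $2$ only once $g\geq6$. The problem then reduces to decomposing $\L_2(V_{1+1+1})=\Lambda^2 V_{1+1+1}$ and $\L_3(V_{1+1+1})$ into $\Sp_g$-irreducibles and deciding which summands are killed by the bracket map $\L_k(V_{1+1+1})\to\Gr^W_{-k}\u_g$, whose kernel is exactly $\Gr^W_{-k}H_2(\u_g)$.

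The cohomological input bounds the relations from above: computing the weight-$(2+|\lambda|)$ and weight-$(3+|\lambda|)$ graded pieces of $H^2(\M_g,\V_\lambda)$ shows which irreducibles can occur. To turn this into the stated equalities I would produce a matching lower bound on $\Gr^W_{-k}\u_g$ by exhibiting explicit commuting elements in the Torelli group (bounding pairs and separating twists) whose brackets are nonzero, in the manner of \cite{hain0,hain4}: this shows that $V_{2+2}$ survives the degree-$2$ bracket for $g\geq4$, so that the quadratic relation is its $\Sp_g$-complement in $\L_2(V_{1+1+1})$, whereas for $g=3$ only $V_0$ fails to survive (the unique quadratic relation) and a genuine cubic relation, the $\Sp_3$-complement of $V_{3+1+1}$ in $\L_3(V_{1+1+1})$, appears. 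I expect the main obstacle to be exactly this two-sided pinning. Kabanov's theorem and the representation theory give the upper bound, but identifying the precise surviving components $V_{2+2}$ and $V_{3+1+1}$ — and the genus dichotomy between $g=3$ and $g\geq4$ — requires the geometric construction of the commuting mapping classes. A further subtlety is that Kabanov's bound still permits a weight-$(3+|\lambda|)$ piece for $g=4,5$, so ruling out cubic relations there, as the theorem asserts for all $g\geq4$, is part of this same finer analysis rather than a formal consequence of the purity estimate.
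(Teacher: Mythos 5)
The paper does not give a proof of this theorem: it is quoted as background from Hain's work, and the surrounding discussion names exactly the ingredients you use --- Johnson's computation of the abelianization for the generators $H_1(\u_g)\cong V_{1+1+1}$, Kabanov's purity theorem to confine $\Gr^W_\bullet H_2(\u_g)$ to weights $-2$ and $-3$, and Hain's explicit constructions with commuting elements of the Torelli group to identify the surviving components and to rule out cubic relations for $g\geq 4$. Your outline, including the caveat that purity alone does not eliminate the weight $-3$ piece for $g=4,5$, is a faithful reconstruction of that same strategy.
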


\section{Generators of the Lie algebra $\u_2$}
By Proposition \ref{key property}, the generators  of $\u_2$ are determined by the cohomology
groups $H^1(\M_2, \V_{a+b})$, which we will determine in this section. 

 Recall that the Torelli map $i:\M_2\to \A_2$ is an open immersion and the complement of the image is a divisor, denoted by $\D_{1,1}$, whose generic point is a product of two unordered elliptic curves.  The product $\A_1\times \A_{1}$ is a two-sheeted cover of $\D_{1,1}$, where the symmetric group $S_2$ simply interchanges the two elliptic components. Each irreducible rational representation $V_{a+b}$ of $\Sp_2$ defines a variation of Hodge structure $\V_{a+b}$ of weight $a+b$ on $\A_2$. The involution acts on $\V_{a+b}$ by $(-1)^{a+b}\id$, while it acts trivially on the cohomology, and hence the cohomology of $\V_{a+b}$ vanishes when $a+b=$ odd. Thus we will only need to consider the case when $a+b$ even.  The restriction of $\V_{a+b}$ to $\M_2$ and $\D_{1,1}$ are also denoted by $\V_{a+b}$.   Associated to the open immersion $i:\M_2\to \A_2$, there is a Gysin sequence
 $$  H^1(\A_2, \V_{a+b})\to H^1(\M_2, \V_{a+b})\to H^0(\D_{1,1}, \V_{a+b}(-1))
  \to H^2(\A_2, \V_{a+b}).$$
  
By a result of Borel in \cite{Bor}, for $a+b>0$, the cohomology group $H^1(\A_2, \V_{a+b})\cong H^1(\Sp_2(\Z), V_{a+b})$ vanishes. Thus $H^1(\M_2, \V_{a+b})\cong H^1(\G_2, V_{a+b})$ is the kernel of the Gysin map $H^0(\D_{1,1}, \V_{a+b}(-1))\to H^2(\A_2, \V_{a+b})$. 
It follows immediately from \cite[Thm.~2.1]{pet2} that $H^2(\A_2, \V_{a+b})$ vanishes except when  $a=b$ even and the dimension of the cusp forms for $SL_2(\Z)$ of weight $a+b+4$ is positive, and that when $a=b=2l$ and $l\geq 2$, $H^2(\A_2, \V_{2l+2l})$ is the direct sum of copies of Tate HS $\Q(-2l-1)$, having dimension at least one.   Firstly, we note that the branching formula \cite[Prop.3.4]{pet1} implies that $H^0(\D_{1,1}, \V_{a+b})=0$ unless $a=b$ even. Therefore, we will only need to consider the case when $a=b=2l$. In this case, the restriction of $\V_{2l+2l}$ to $\D_{1,1}$ contains a single copy of the trivial local system $\UU^{+}_0(-2l)$ as a summand and thus $H^0(\D_{1,1}, \V_{2l+2l})(-1)=\Q(-2l-1)$. Secondly, we observe that for $l=1$, $H^2(\A_2, \V_{2+2})=0$, since there is no cusp form of weight 8, and hence $H^1(\G_2, V_{2+2})=\Q(-3)$. For $l\geq 2$, we need the following nonvanishing theorem by Petersen. 

\begin{theorem}[{\cite[Thm.~5.1]{pet3}}]\label{gysin map 1}
	For $l\geq 2$, the Gysin map $$H^0(\D_{1,1}, \V_{2l+2l})(-1)\to H^2(\A_2, \V_{2l+2l})$$ is nontrivial. 
\end{theorem}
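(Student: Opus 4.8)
The plan is to convert the nonvanishing of the Gysin map into a vanishing statement for $H^1(\M_2,\V_{2l+2l})$ and then to establish that vanishing through the arithmetic of the cohomology of local systems on $\A_2$. First I would isolate the relevant segment of the Gysin sequence,
$$H^1(\A_2,\V_{2l+2l})\to H^1(\M_2,\V_{2l+2l})\xrightarrow{\ \mathrm{res}\ }H^0(\D_{1,1},\V_{2l+2l})(-1)\xrightarrow{\ g\ }H^2(\A_2,\V_{2l+2l}).$$
By Borel's vanishing the left-hand term is $0$, so $\mathrm{res}$ is injective with image $\ker g$; and the branching formula identifies $H^0(\D_{1,1},\V_{2l+2l})(-1)$ with the one-dimensional space $\Q(-2l-1)$ carried by the single trivial summand $\UU_0^+(-2l)$. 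Hence $g$ is nontrivial if and only if it is injective, i.e.\ if and only if $H^1(\M_2,\V_{2l+2l})=H^1(\G_2,V_{2l+2l})=0$. This is the statement I would prove.

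Second, I would establish this vanishing through the structure of $H^2(\A_2,\V_{2l+2l})$. By \cite{pet2} the nonzero classes there are of pure Tate type $\Q(-2l-1)$ and occur as Eisenstein cohomology whose multiplicity is governed by the space of weight-$(4l+4)$ cusp forms for $\SL_2(\Z)$, which is nonzero precisely for $l\ge 2$. The content I would extract is that these Eisenstein classes are supported on the boundary $\D_{1,1}$: concretely, I would compute the residue (constant-term) map along $\D_{1,1}$ and show it is the restriction dual to $g$, so that a nonzero class in $H^2(\A_2,\V_{2l+2l})$ lies in $\ker\bigl(H^2(\A_2,\V_{2l+2l})\to H^2(\M_2,\V_{2l+2l})\bigr)=\im g$. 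Equivalently, one computes $H^1(\M_2,\V_{2l+2l})$ outright, for instance via the hyperelliptic presentation of $\M_2$ as a quotient of $\M_{0,6}$, where $\V_{2l+2l}$ pulls back to an explicit local system and the $S_6$-invariant part of $H^1$ can be read off.

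The main obstacle is that the nonvanishing is genuinely arithmetic and admits no formal shortcut. Since the source $\Q(-2l-1)$ and the relevant summand of the target are both pure of the \emph{same} Tate type, there is no weight- or Hodge-theoretic obstruction that would force $g$ to be injective. The naive geometric test also fails: the self-intersection $i^\ast i_\ast$ is cup product with $c_1(N_{\D_{1,1}/\A_2})\in H^2(\D_{1,1},\Q)$, and since $\D_{1,1}=(\A_1\times\A_1)/S_2$ has $H^2(\D_{1,1},\Q)=0$, this composite vanishes identically and carries no information. The nonvanishing must therefore be produced from the arithmetic input itself --- the existence of weight-$(4l+4)$ cusp forms for $l\ge 2$ together with the nondegeneracy of their contribution to the boundary cohomology --- and showing that the resulting boundary class survives in $H^2(\A_2,\V_{2l+2l})$ rather than cancelling against interior cohomology is exactly the delicate point, which is the content of the cited \cite[Thm.~5.1]{pet3}.
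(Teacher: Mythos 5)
Your reduction is correct and matches exactly how the paper uses this statement: since $H^1(\A_2,\V_{2l+2l})=0$ by Borel and $H^0(\D_{1,1},\V_{2l+2l})(-1)\cong\Q(-2l-1)$ is one-dimensional by the branching formula, nontriviality of the Gysin map is equivalent to $H^1(\M_2,\V_{2l+2l})=0$. Your negative observations are also sound: there is no weight- or Hodge-theoretic obstruction since source and target are of the same Tate type, and the composite $i^\ast i_\ast$ carries no information because $H^2(\D_{1,1},\Q)=0$. But the proposal does not close the argument: it reduces the theorem to a statement that you then attribute back to \cite[Thm.~5.1]{pet3}, which is circular as a proof. (The paper itself also quotes this as Petersen's theorem rather than reproving it, but it records the actual mechanism in \S 7.1, and your sketch of that mechanism is not the right one.)

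The concrete gap in your route (a) is that you locate the ``residue/constant-term'' computation along $\D_{1,1}$. But $\D_{1,1}$ is an interior divisor of $\A_2$, not a cusp: the classes in $H^2(\A_2,\V_{2l+2l})$ are Eisenstein with respect to the Baily--Borel boundary of $\overline{\A}_2$, and being Eisenstein gives no a priori reason for such a class to restrict to zero on $\M_2=\A_2\setminus\D_{1,1}$, so the step ``a nonzero class lies in $\im g$'' is asserted, not proved. Petersen's argument --- which the paper adapts in \S 7.2 for the degree-three analogue --- instead maps both sides of the Gysin map to the stalks at the common zero-dimensional stratum $i_0$ of the Baily--Borel compactifications of $\A_1\times\A_1$ and $\A_2$, uses Harder's formula to identify $i_0^\ast\mathrm{R}^2j_\ast\V_{2l+2l}$ with $H^\bullet(M_Q(\Z),H^\bullet(\n_Q,V_{2l+2l}))$, and reduces the nonvanishing to the nontriviality of a modular symbol, namely the projection to the eigenspace $E_0$ of the period polynomial of a normalized weight-$(4l+4)$ Hecke eigenform, which is related to the central value of the attached $L$-function. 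None of this arithmetic input appears in your write-up, and it is precisely the content of the theorem. Your alternative route via $\M_2=\M_{0,6}/S_6$ could in principle compute $H^1(\M_2,\V_{2l+2l})$ outright, but it is not carried out and is a genuinely different (and not obviously easier) computation than anything in the paper.
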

\begin{remark}
	We will adopt Petersen's proof of this theorem  to bound $H_2(\u_2)$. The strategy and more detail will be given in section \ref{bound}.
\end{remark}
 Therefore, we have
\begin{corollary}\label{generator}
	There is an isomorphism of $\Q$-HSs
	$$H^1(\M_2, \V_{a+b})\cong \left\{
	\begin{array}{ll}
	\Q(-3) & \text{ if }a=b=2 \\
	 0& \text{otherwise}. \\
	\end{array} 
	\right. 
	$$ \qed
\end{corollary}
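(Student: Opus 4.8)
The plan is to extract $H^1(\M_2,\V_{a+b})$ from the Gysin sequence
$$H^1(\A_2,\V_{a+b})\to H^1(\M_2,\V_{a+b})\to H^0(\D_{1,1},\V_{a+b}(-1))\to H^2(\A_2,\V_{a+b})$$
by trivializing the left-hand term and then analyzing the connecting (Gysin) map. For $a+b>0$, Borel's theorem gives $H^1(\A_2,\V_{a+b})\cong H^1(\Sp_2(\Z),V_{a+b})=0$, so $H^1(\M_2,\V_{a+b})$ is precisely the kernel of the Gysin map $H^0(\D_{1,1},\V_{a+b}(-1))\to H^2(\A_2,\V_{a+b})$. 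The entire statement then reduces to computing this kernel, which I would organize through a short parity-and-diagonal case analysis.

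First I would clear away every case in which the source itself vanishes. If $a+b$ is odd, the interchange involution on $\A_1\times\A_1$ acts by $(-1)^{a+b}=-1$ on $\V_{a+b}$ and trivially on cohomology, forcing $H^0(\D_{1,1},\V_{a+b}(-1))=0$; if $a+b$ is even but $a\neq b$, Petersen's branching formula shows the restriction of $\V_{a+b}$ to $\D_{1,1}$ has no trivial summand, so again $H^0(\D_{1,1},\V_{a+b}(-1))=0$. In both families the kernel is zero, giving $H^1(\M_2,\V_{a+b})=0$ as required. This isolates the only surviving case, the diagonal $a=b=2l$ with $l\geq 1$.

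For $a=b=2l$ I would first pin down the source exactly: by the branching formula the restriction contains a single trivial summand $\UU_0^+(-2l)$, whence $H^0(\D_{1,1},\V_{2l+2l}(-1))\cong\Q(-2l-1)$ is one-dimensional. The outcome now turns entirely on the injectivity of the Gysin map out of this one-dimensional space. For $l=1$ the target $H^2(\A_2,\V_{2+2})$ is zero, since $\SL_2(\Z)$ has no cusp form of weight $8$; hence the Gysin map is the zero map, its kernel is all of $\Q(-3)$, and $H^1(\M_2,\V_{2+2})\cong\Q(-3)$. For $l\geq 2$, Theorem \ref{gysin map 1} asserts that the Gysin map is nontrivial, and because its source is one-dimensional nontriviality is the same as injectivity; thus the kernel vanishes and $H^1(\M_2,\V_{2l+2l})=0$. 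Assembling the three cases yields the stated formula.

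The only genuinely hard input is the nonvanishing of the Gysin map for $l\geq 2$, i.e.\ Theorem \ref{gysin map 1}, which I take as given here; everything else is bookkeeping with the Gysin sequence, the parity involution, Borel vanishing, and the branching formula. The one point I would make sure to verify is that the source is exactly one-dimensional, since it is precisely this that converts Petersen's nonvanishing statement into the injectivity needed for the ``$0$ otherwise'' clause.
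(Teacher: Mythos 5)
Your proposal is correct and follows essentially the same route as the paper: Gysin sequence for the open immersion $\M_2\hookrightarrow\A_2$, Borel vanishing of $H^1(\A_2,\V_{a+b})$, the parity involution and branching formula to kill the source except on the even diagonal, one-dimensionality of $H^0(\D_{1,1},\V_{2l+2l}(-1))\cong\Q(-2l-1)$, absence of weight-$8$ cusp forms for $l=1$, and Petersen's nonvanishing theorem for $l\geq 2$. The only point to tighten is that your trichotomy literally omits the subcase $a=b$ odd (where $a+b$ is even and $a=b$, yet $l$ is not an integer): it is disposed of by the same branching formula, since the relevant $S_2$-isotypic summand of $V_{a+a}$ restricted to $\D_{1,1}$ is then $U_0^-$ rather than $U_0^+$, and $H^0(\D_{1,1},\UU_0^-)\cong\Lambda^2 H^0(\M_{1,1},\Q)=0$, so the source of the Gysin map still vanishes.
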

Proposition \ref{key property} gives us
\begin{corollary}\label{generator of completion} There is an isomorphism of $\Q$-HSs
	$$H_1(\u_2)\cong V_{2+2}(3).$$ 
\end{corollary}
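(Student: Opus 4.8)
The plan is to combine the cohomology computation of Corollary~\ref{generator} with the general comparison in Proposition~\ref{key property}, and then to pass from cohomology to homology by dualizing. First I would invoke Proposition~\ref{key property}(1), which furnishes an $\Sp_2$-equivariant isomorphism of $\Q$-MHSs
$$H^1(\u_2)\cong\bigoplus_\lambda H^1(\G_2,V_\lambda)\otimes V_\lambda^\ast,$$
where $\lambda$ ranges over all partitions. Recalling the identification $H^1(\M_2,\V_{a+b})\cong H^1(\G_2,V_{a+b})$ already used above, Corollary~\ref{generator} says that the only nonzero summand is the one indexed by $\lambda=(2,2)$, for which $H^1(\G_2,V_{2+2})\cong\Q(-3)$. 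Hence
$$H^1(\u_2)\cong\Q(-3)\otimes V_{2+2}^\ast.$$

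It remains to rewrite the right-hand side as a single Tate-twisted irreducible and to dualize. Here I would use that $V_{2+2}$ underlies a polarized VHS of weight $4$, so the polarization gives a canonical isomorphism $V_{2+2}^\ast\cong V_{2+2}(4)$ of Hodge structures. Substituting and collecting twists yields $H^1(\u_2)\cong\Q(-3)\otimes V_{2+2}(4)\cong V_{2+2}(1)$, a pure Hodge structure of weight $2$. Finally, since $H^1(\u_2)$ is canonically the dual of the abelianization $H_1(\u_2)=\u_2^{\ab}$, with the evaluation pairing valued in $\Q(0)$, I would dualize once more:
$$H_1(\u_2)\cong\bigl(H^1(\u_2)\bigr)^\ast\cong\bigl(V_{2+2}(1)\bigr)^\ast\cong V_{2+2}^\ast(-1)\cong V_{2+2}(3),$$
which is pure of weight $-2$.

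Essentially all of the mathematical substance has been packaged into Corollary~\ref{generator}, which in turn rests on Borel's vanishing of $H^1(\Sp_2(\Z),V_{a+b})$, the branching formula, and Petersen's (non)vanishing theorems for $H^2(\A_2,\V_{2l+2l})$; the present step is formal. Consequently the only point requiring genuine care, and the one I would treat as the main obstacle, is the bookkeeping of Tate twists together with the two dualizations: one must fix the convention that $V_\lambda^\ast$ in Proposition~\ref{key property} carries the Hodge structure of the dual VHS (of weight $-|\lambda|$), and must keep the polarization isomorphism $V_{2+2}^\ast\cong V_{2+2}(4)$ consistent, since any slip here propagates directly into the final twist. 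A reassuring check is that the resulting weight $-2$ agrees with the degree of the generator $V_{2+2}(3)$ of the free Lie algebra in Theorem~\ref{main result}.
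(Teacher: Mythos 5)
Your proposal is correct and follows the same route as the paper, which simply cites Proposition \ref{key property}(i) together with Corollary \ref{generator}; your explicit tracking of the two dualizations and the polarization isomorphism $V_{2+2}^\ast\cong V_{2+2}(4)$ is consistent with the paper's convention that $V_{a+b}(m)$ has weight $a+b-2m$, and lands correctly on the weight $-2$ Hodge structure $V_{2+2}(3)$.
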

%
%
%
%
%
%
%

That $H_1(\u_2)$ is pure of weight $-2$ and that the bracket is a morphism of MHSs give 

\begin{corollary}\label{filtration} For $m\geq 1$, we have
	$$W_{-2m+1}\u_2=W_{-2m}\u_2=L^m\u_2,$$ 
	where $L^\bullet\n$ denotes the lower central series of $\n$. 
	\qed
\end{corollary}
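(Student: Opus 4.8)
The plan is to reduce everything to two inputs already established: by Corollary \ref{generator of completion} the abelianization $H_1(\u_2)\cong V_{2+2}(3)$ is pure of weight $-2$, and by Theorem \ref{hs on relative completion} the bracket on $\u_2$ is a morphism of MHS, so that $[W_a\u_2, W_b\u_2]\subseteq W_{a+b}\u_2$. This is the genus-two analogue of \cite[Lem.~4.7]{hain0}, where the corresponding abelianization is pure of weight $-1$; the only structural change is that the generators now sit in weight $-2$, which shifts the indexing by a factor of two and forces the odd-weight graded pieces to vanish. I would first pass to the associated graded Lie algebra $\Gr^W_\bullet\u_2$, analyze its generation, and then transfer the conclusion back to the filtration itself.

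First I would record that $\Gr^W_\bullet\u_2$ is generated in weight $-2$. Since $[\u_2,\u_2]$ is a sub-MHS and the quotient map $\u_2\to H_1(\u_2)$ is strict, the functor $\Gr^W_\bullet$ commutes with abelianization, giving $H_1(\Gr^W_\bullet\u_2)\cong\Gr^W_\bullet H_1(\u_2)\cong V_{2+2}(3)$, concentrated in weight $-2$. Hence $\Gr^W_\bullet\u_2$ is a graded Lie algebra generated by its weight $-2$ part. Because every generator lies in even weight, every iterated bracket lies in even weight as well, so $\Gr^W_w\u_2=0$ whenever $w$ is odd. In particular $\Gr^W_{-2m+1}\u_2=0$ for all $m\geq1$, which is exactly the first equality $W_{-2m+1}\u_2=W_{-2m}\u_2$. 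The same generation statement shows $\Gr^W_w\u_2=0$ for $w\geq -1$, so that $\u_2=W_{-2}\u_2$.

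Next I would identify the even part of the weight filtration with the lower central series. The inclusion $L^m\u_2\subseteq W_{-2m}\u_2$ follows by induction on $m$, using $\u_2=W_{-2}\u_2$ together with $[W_{-2}\u_2, W_{-2(m-1)}\u_2]\subseteq W_{-2m}\u_2$. For the reverse inclusion I would exploit generation in weight $-2$: since $\Gr^W_{-2}\u_2$ generates $\Gr^W_\bullet\u_2$, every class in $\Gr^W_{-2m}\u_2$ is a sum of $m$-fold brackets of weight $-2$ elements, whence $W_{-2m}\u_2=L^m\u_2+W_{-2m-1}\u_2=L^m\u_2+W_{-2m-2}\u_2$, the last step using the vanishing of the odd graded piece. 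Iterating this and using that $\u_2$ is pronilpotent, hence separated and complete for its lower central series, upgrades the congruence to the equality $W_{-2m}\u_2=L^m\u_2$.

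The main obstacle is not any single computation but making the strictness and completeness arguments precise in the pro-category: one must verify that $[\u_2,\u_2]$ is a closed sub-MHS so that $\Gr^W_\bullet$ is exact on the abelianization sequence, and that the ``spanned modulo higher weight'' statement promotes to an honest equality of the closed filtration terms $W_{-2m}\u_2$ and $L^m\u_2$. Both are standard once purity of $H_1(\u_2)$ is in place, so the genuine content of the corollary is Corollary \ref{generator of completion} together with the Hodge-theoretic compatibility of the bracket; the remainder is the formal consequence recorded in \cite[Lem.~4.7]{hain0}, reindexed to account for the generators living in weight $-2$ rather than $-1$.
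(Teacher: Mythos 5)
Your argument is correct and is exactly the one the paper intends: the corollary is stated there as an immediate consequence of the purity of $H_1(\u_2)$ in weight $-2$ (Corollary \ref{generator of completion}) and the compatibility of the bracket with the MHS, following the template of \cite[Lem.~4.7]{hain0} reindexed for generators in weight $-2$. You have simply written out the details that the paper leaves to the reader.
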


\section{Bounds for  the degree of the relations of the Lie algebra $\u_2$}\label{bound}

In this section, we will bound the cohomology $H^2(\u_2)$ by partially computing $H^2(\M_2, \V_{a+b})$. This will be done by considering the natural inclusion 
$$(H^2(\u_2)\otimes V_{a+b})^{\Sp}\to H^2(\M_2, \V_{a+b})$$ that is a morphism of MHS.
We will consider the part of the Gysin sequence
$$H^0(\D_{1,1}, \V_{a+b}(-1))\to H^2(\A_2, \V_{a+b})\to H^2(\M_2, \V_{a+b})\hspace{1in}$$
$$\hspace{1in} \to H^1(\D_{1,1}, \V_{a+b}(-1))\to H^3(\A_2, \V_{a+b}).$$ 

The following result is a key to bound $H^2(\u_2)$ and follows easily from Petersen's work \cite{pet2}.
\begin{proposition} \label{weights of the second cohomology g=2}
	\begin{enumerate} Assume that $a+b$ is even.
		\item $H^2(\M_2, \V_{a+b})=0$ if $a=b$ odd.
		\item The possible weights of $H^2(\M_2, \V_{a+b})$ are given by
		$$\left\{
		\begin{array}{ll}
		2a+2 &\text{ if } a=b \text{ even } \\
		a+b+3,\hspace{.1in}2a+4=a+b+a-b+4	& \text{ if }a>b. \\
		\end{array} 
		\right. 
		$$
	\end{enumerate}
\end{proposition}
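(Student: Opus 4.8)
The plan is to read everything off the Gysin sequence displayed before the statement,
$$H^0(\D_{1,1}, \V_{a+b}(-1))\to H^2(\A_2, \V_{a+b})\to H^2(\M_2, \V_{a+b})\to H^1(\D_{1,1}, \V_{a+b}(-1))\to H^3(\A_2, \V_{a+b}),$$
combined with strictness of morphisms of MHS. Since every map here is a morphism of MHS, $H^2(\M_2,\V_{a+b})$ is an extension of a subobject of $H^1(\D_{1,1},\V_{a+b}(-1))$ by a quotient of $H^2(\A_2,\V_{a+b})$; hence its weights lie among the weights of $H^2(\A_2,\V_{a+b})$ and of $H^1(\D_{1,1},\V_{a+b}(-1))$. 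So I would compute the weights of these two groups and then read off the conclusion in the three cases $a=b$ odd, $a=b$ even, and $a>b$.

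First I would handle the boundary. Using Petersen's branching formula \cite[Prop.~3.4]{pet1} I decompose $\mathrm{Res}\,\V_{a+b}$ into the local systems $\UU_{c,d}$, $\UU_c^+$, $\UU_c^-$, and then compute $H^\bullet(\D_{1,1},-)$ of each summand by Remark \ref{cohomology on D11} as a K\"unneth / graded-symmetric product of the $H^\bullet(\M_{1,1},\H_\bullet)$. Because $H^0(\M_{1,1},\H_d)\neq0$ only for $d=0$ and $H^1(\M_{1,1},\Q)=0$, the only summands that can contribute in degrees $\le1$ are those with a trivial tensor factor, namely $\UU_0^{\pm}$ and the $\UU_{c,0}$. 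By the branching formula these trivial-factor summands are exactly $\UU_0^+$ when $a=b$ is even, $\UU_0^-$ when $a=b$ is odd, and a single $\UU_{a-b,0}$ when $a>b$. Since $\UU_0^{\pm}$ contributes to $H^\bullet(\D_{1,1},-)$ only in degree $0$, this already gives $H^1(\D_{1,1},\V_{a+b})=0$ whenever $a=b$. When $a>b$, the Example's twist gives the summand $\UU_{a-b,0}(-b)$, so by Eichler--Shimura $H^1(\D_{1,1},\V_{a+b})\cong H^1(\M_{1,1},\H_{a-b})(-b)$ has a cuspidal part of weight $a+b+1$ and an Eisenstein part of weight $2a+2$; twisting by $\Q(-1)$ yields weights $a+b+3$ and $2a+4$.

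For the $\A_2$-terms I would invoke \cite[Thm.~2.1]{pet2}: $H^2(\A_2,\V_{a+b})=0$ unless $a=b$ is even, in which case it is pure Tate of weight $2a+2$. Assembling the three cases is then immediate. If $a=b$ is odd, both $H^2(\A_2,\V_{a+b})$ and $H^1(\D_{1,1},\V_{a+b}(-1))$ vanish, so $H^2(\M_2,\V_{a+b})=0$, which is part (i). If $a=b$ is even, then $H^1(\D_{1,1},\V_{a+b}(-1))=0$, so $H^2(\M_2,\V_{a+b})$ is a quotient of the pure weight-$(2a+2)$ group $H^2(\A_2,\V_{a+b})$ and is therefore pure of weight $2a+2$. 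If $a>b$, then $H^2(\A_2,\V_{a+b})=0$, so $H^2(\M_2,\V_{a+b})$ injects into $H^1(\D_{1,1},\V_{a+b}(-1))$, whose only weights are $a+b+3$ and $2a+4$.

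The hard part will be the branching-theoretic input in the second step: I must ensure that for $a>b$ the unique trivial-factor summand contributing nontrivially is $\UU_{a-b,0}$, i.e. that no $\UU_{c,0}$ with even $2\le c<a-b$ occurs. Otherwise $H^1(\D_{1,1},\V_{a+b}(-1))$ would acquire spurious Eisenstein weights $a+b+c+4$ strictly between $a+b+3$ and $2a+4$, which are not on the list. I would verify the absence of such intermediate summands directly from the explicit shape of \cite[Prop.~3.4]{pet1} (this is visible for the symmetric-power cases $V_{a+0}=\mathrm{Sym}^a V$, where only $\UU_{a,0}$ appears). Should intermediate summands nonetheless occur, the fallback is to use Petersen's full determination of $H^3(\A_2,\V_{a+b})$ in \cite{pet2} to show that the corresponding Eisenstein classes map injectively under the last Gysin map and hence do not survive to $H^2(\M_2,\V_{a+b})$; controlling this boundary map is the one genuinely delicate point, as the remaining steps are bookkeeping with Hodge weights.
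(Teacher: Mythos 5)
Your proposal is correct and follows essentially the same route as the paper: the Gysin sequence for $\M_2\subset\A_2$, Petersen's Theorem 2.1 for the $\A_2$ terms, his branching formula plus the K\"unneth description of $H^\bullet(\D_{1,1},-)$ for the boundary terms, and the splitting of $H^1(\SL_2(\Z),H_{a-b})$ into its weight-$(a-b+1)$ and Eisenstein $\Q(-(a-b)-1)$ pieces. The one point you flag as delicate --- that for $a>b$ the only summand of the form $\UU_{c,0}$ is the single copy of $\UU_{a-b,0}$ --- is exactly what the paper also reads off directly from \cite[Prop.~3.4]{pet1}, so no fallback via $H^3(\A_2,\V_{a+b})$ is needed.
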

\begin{proof}
	For (i), first we note that the cohomology group $H^2(\A_2, \V_{a+a})$ vanishes \cite[Thm.~2.1]{pet2} and that Petersen's branching formula  implies that $H^1(\D_{1,1}, \V_{a+a})$ vanishes. Thus the result follows.\\
	\indent For (ii), first assume that $a=b$ even. Then similarly, we have $H^1(\D_{1,1}, V_{a+a})=0$. Thus the restriction map $H^2(\A_2, V_{a+a})\to H^2(\M_2, \V_{a+a})$ is surjective. By \cite[Thm.~2.1]{pet2}, $H^2(\A_2, \V_{a+a})$ is pure of weight $2a+2$ and hence the claim. Next, assume that $a>b$. The branching formula \cite[Prop.3.4]{pet1} by Petersen shows that the restriction of $\V_{a+b}$ to $\D_{1,1}$ contains a single copy of the local system $\UU_{a-b,0}(-b)$ as a summand and 
	 that it is the only term contributing to cohomology, i.e., we have $H^1(\D_{1,1}, \V_{a+b}(-1))=H^1(\D_{1,1}, \UU_{a-b,0})(-b-1)$. From Remark \ref{cohomology on D11} 
    We have  isomorphisms
    $$H^1(\D_{1,1}, \UU_{a-b,0})=H^1(\A_1, \H_{a-b})\otimes H^0(\A_1, \Q)\cong H^1(\SL_2(\Z), H_{a-b}).$$ The weight filtration on $H^1(\SL_2(\Z), H_{a-b})$ splits (\cite{ser}) and hence we have 
    $$H^1(\D_{1,1}, \UU_{a-b,0})(-b-1)=(W_{a-b+1}H^1(\SL_2(\Z), H_{a-b}))(-b-1)\oplus \Q(-a-2).$$ 
	The first summand is a $\Q$-HS of weight $a+b+3$ and the second a Tate HS of weight $2a+4$. Since $H^2(\A_2, \V_{a+b})=0$ in this case, our claim follows. 
\end{proof}

Recall that when $\n$ is a pronilpotent Lie algebra that is negatively weighted, there is an injection of graded vector spaces $r:H_2(\n)\to \f:=\L(H_1(\n))$ such that the composition $H_2(\n)\overset{r}\to [\f, \f]\to \Lambda^2 H_1(\n)$ is the dual to the cup product $\Lambda^2 H^1(\n)\to H^2(\n)$.
 By Proposition \ref{filtration}, $H_1(\u_2)$ is pure of weight -2, and hence the  free Lie algebra $\L(H_1(\u_2))$ has only even weights. Then this implies that $H_2(\u_2)$ has only even weights $-2m$ with $m\geq 2$. Therefore, we will only need to consider the HS $\Q(-a-2)$ appearing as a suumand in $H^1(\D_{1,1}, \V_{a+b}(-1))$ with $a>b$. \\
\indent In order to determine whether the Tate term $\Q(-a-2)$ appears in $H^2(\M_2, \V_{a+b})$, we will consider the Gysin map 
$$\delta: H^1(\D_{1,1}, \V_{a+b}(-1))\to H^3(\A_2, \V_{a+b}).$$
 We note from \cite[Thm.~2.1]{pet2} that  $H^3(\A_2, \V_{a+b})$ contains the direct sum of $s_{a+b+4}$ copies of $\Q(-a-2)$ as a summand, where $s_m$ denotes the dimension of the cusp forms for $SL_2(\Z)$ of weight $m$. Petersen kindly shared with me his unpublished note \cite{pet4} in which he conjectures that whenever $s_{a+b+4}>0$, the Gysin map is injective.   We will prove the following partial analogue of Theorem \ref{gysin map 1} by adopting Petersen's approach.
\begin{theorem}\label{gysin2} Assume that $a+b$ is even and $a>b$. If $s_{a+b+4}>0$, the restriction of the Gysin map $\delta:H^1(\D_{1,1}, \V_{a+b}(-1))\to H^3(\A_2, \V_{a+b})$ to the $\Q$-sub HS $\Q(-a-2)$ of $H^1(\D_{1,1}, \V_{a+b}(-1))$ is nontrivial. 	
\end{theorem}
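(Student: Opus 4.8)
The plan is to adopt Petersen's proof of Theorem~\ref{gysin map 1}, raising the cohomological degree by one and replacing the class coming from $H^0$ by the Eisenstein class coming from $H^1$. Throughout I regard $\A_2$ as the locally symmetric space $\Sp_2(\Z)\backslash\h_2$, with Baily--Borel compactification $\A_2^\ast=\A_2\sqcup\A_1\sqcup\{c\}$ and a zero-dimensional cusp $c$ attached to the minimal rational parabolic $P_0$. The idea is to detect $\delta(\Q(-a-2))$ after restriction to the boundary: writing $r\colon H^3(\A_2,\V_{a+b})\to H^3(\partial,\V_{a+b})$ for the restriction to the boundary $\partial$ of the Borel--Serre compactification, which sits in the long exact sequence with $H^\bullet_c(\A_2,\V_{a+b})$, it suffices to prove that $r\circ\delta$ is nonzero on the Tate summand $\Q(-a-2)$.

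First I would show that the summand $\Q(-a-2)^{\oplus s_{a+b+4}}$ of $H^3(\A_2,\V_{a+b})$ (\cite[Thm.~2.1]{pet2}) is Eisenstein, that is, that $r$ is injective on it and that it is accounted for by the deepest boundary face (that of $P_0$). This is a computation of the Lie algebra cohomology $H^\bullet(\n_0,V_{a+b})$ of the nilradical $\n_0$ of $P_0$ via Kostant's theorem: after passing to the part fixed by the Levi torus one isolates, in the relevant degree, a one-dimensional piece of weight exactly $2a+4$, and assembling the boundary complex over the intermediate stratum $\A_1$ brings in a factor of elliptic cusp forms of weight $a+b+4$, matching the multiplicity $s_{a+b+4}$. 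In parallel, the source $\Q(-a-2)\subset H^1(\D_{1,1},\V_{a+b}(-1))$ is, by Proposition~\ref{weights of the second cohomology g=2}, the twisted Eisenstein part of $H^1(\SL_2(\Z),H_{a-b})$, which by Eichler--Shimura is detected at the single cusp of $\A_1=\M_{1,1}$.

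The core step is to feed these identifications into a commutative diagram relating $\delta$ to the boundary restriction $r$. Under the Baily--Borel compactification the closure of $\D_{1,1}=(\A_1\times\A_1)/S_2$ degenerates, along one elliptic factor, to the Klingen-type stratum $\A_1\subset\partial\A_2^\ast$ and, along the totally degenerate locus, to the cusp $c$. Using a compatible pair of toroidal compactifications of $\M_2\subset\A_2$ and of $\D_{1,1}$, the Gysin map $\delta$ and the restriction $r$ should fit into a commutative square with the corresponding pushforward on boundary cohomology, so that $r\circ\delta$ on $\Q(-a-2)$ is computed by a purely Lie-algebraic map governed by the Kostant decomposition at $c$. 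Since any nonzero element of the Tate part of $H^3(\A_2,\V_{a+b})$ is Eisenstein and hence injects under $r$, it then suffices to check that this Lie-algebraic map sends the Eisenstein generator of the source to a nonzero element, and the hypothesis $s_{a+b+4}>0$ guarantees that the target boundary space is nonzero.

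The hard part will be exactly the non-vanishing of the resulting structure constant: establishing that the Gysin pushforward along the interior divisor $\D_{1,1}$ and the restriction to the zero-dimensional cusp agree up to a nonzero explicit scalar requires careful tracking of Hodge--Tate weights through the degeneration and a precise identification of the relevant Kostant summand (the Weyl-group element of the correct length). This is the point at which Petersen's argument for Theorem~\ref{gysin map 1} must be genuinely adapted rather than merely cited. By contrast the cuspidal summand of $H^1(\D_{1,1},\V_{a+b}(-1))$ causes no difficulty: it carries odd weight $a+b+3$, whereas every class on the target side is Tate of even weight $2a+4$, so a weight comparison decouples it cleanly from the summand $\Q(-a-2)$ that must be controlled.
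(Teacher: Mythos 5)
Your overall strategy --- push the problem to the boundary of a compactification, compute the stalks there via Harder's formula and Kostant's theorem, and detect the Tate class after restriction --- is the same as the paper's, which works with the zero-dimensional stratum of the Baily--Borel compactification and the commutative square comparing $\delta$ with the induced map $\tilde{i}_0^\ast\mathrm{R}^{1}\tilde{j}_\ast \UU_{a-b,0}(-b-1)\to i_0^\ast\mathrm{R}^3j_\ast\V_{a+b}$. But as written the proposal has two genuine gaps. First, a bookkeeping error with consequences: the zero-dimensional cusp of $\overline{\A}_2$ is attached to the \emph{Siegel} (maximal) parabolic $Q$, whose Levi is $M_Q\cong\GL_2$, not to the minimal parabolic with torus Levi. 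Hence the relevant boundary stalk is not a one-dimensional Kostant weight space but
$$i_0^\ast\mathrm{R}^3j_\ast\V_{a+b}\cong H^1\bigl(M_Q(\Z), H^2(\n_Q,V_{a+b})\bigr)\cong H^1(\A_1,\H_{a+b+2})^{\langle\sigma\rangle},$$
an arithmetic cohomology group whose dimension is governed by cusp forms of weight $a+b+4$. If you compute at the minimal parabolic you will not see the multiplicity $s_{a+b+4}$ at all, and the target of your ``Lie-algebraic map'' collapses.

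Second, and more seriously, the step you defer as ``the non-vanishing of the resulting structure constant'' is not a structure constant of a Kostant decomposition and cannot be settled by tracking Hodge--Tate weights. After the boundary identifications, the Gysin map on stalks becomes (the dual of) the restriction
$$H^1_c(\A_1,\H_{a+b+2}\otimes\C)\to H^1_c(\mathbb{R}_{>0}, E_{-(a-b)-2}\otimes\C),$$
i.e.\ the modular symbol sending a normalized Hecke eigenform $f$ of weight $a+b+4$ to (a binomial multiple of) its period $r_b(f)=\int_0^{i\infty}f(z)z^b\,dz$. Its nontriviality is equivalent to $L(f,b+1)\neq 0$, which the paper proves by combining the functional equation (reducing to $L(f,a+3)$) with absolute convergence of the Euler product at $s=a+3$; the hypothesis $a>b$ enters exactly here, via $2(a+2)>a+b+4$. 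Your proposal never introduces periods or $L$-values, so the actual nonvanishing --- which could a priori fail for a particular eigenform --- is left unproved. (Your weight-based decoupling of the cuspidal summand of $H^1(\D_{1,1},\V_{a+b}(-1))$ from the Tate summand is fine and agrees with the paper.)
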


\begin{corollary} \label{relations} If $s_{a+b+4}>0$, then $(H^2(\u_2)\otimes V_{a+b})^{\Sp_2}=0$. Furthermore, $H_2(\u_2)$ is an $\Sp_2$-subrepresentation of the finite direct sum $\bigoplus_{a+b}V_{a+b}(a+2)$, where the sum is taken over the partitions $a+b$ of 2, 4, 6, 10 with $a>b$. 
\end{corollary}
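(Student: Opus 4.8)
The plan is to read the relations of $\Gr^W_\bullet\u_2$ off the $\Sp_2$-isotypic decomposition of $H_2(\u_2)$, using the injection of multiplicity spaces $(H^2(\u_2)\otimes V_{a+b})^{\Sp_2}\hookrightarrow H^2(\M_2,\V_{a+b})$ coming from Proposition \ref{key property}, the weight bounds of Proposition \ref{weights of the second cohomology g=2}, and the Gysin nonvanishing of Theorem \ref{gysin2}. First I would record the weight constraints forced by the generators. By Corollary \ref{generator of completion} the generating space $H_1(\u_2)\cong V_{2+2}(3)$ is pure of weight $-2$, so by Corollary \ref{filtration} the weight filtration coincides with the lower central series and the degree-$m$ part $\L_m(V_{2+2}(3))$ of $\f=\L(H_1(\u_2))$ is pure of weight $-2m$. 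Since the relations inject as $H_2(\u_2)\hookrightarrow[\f,\f]$, the bracket-degree-$m$ piece of $H_2(\u_2)$ is a pure Hodge structure of weight $-2m$ whose $\Sp_2$-types $V_{a+b}$ occur in $\L_m(V_{2+2})$; in particular $H_2(\u_2)$ has only even weights and sits in bracket-degrees $m\geq2$.

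Next I would translate a hypothetical relation into cohomology. If $V_{a+b}$ occurs in the bracket-degree-$m$ part of $H_2(\u_2)$, then to have weight $-2m$ it must appear as the sub-Hodge structure $V_{a+b}(\tfrac{a+b}{2}+m)$; dualizing and using the self-duality $V_{a+b}^\vee\cong V_{a+b}(a+b)$, the corresponding summand of $(H^2(\u_2)\otimes V_{a+b})^{\Sp_2}$ is the Tate structure $\Q(-\tfrac{a+b}{2}-m)$ of weight $(a+b)+2m$. Since the inclusion into $H^2(\M_2,\V_{a+b})$ is a morphism of MHS, this weight must actually occur in the target, and comparison with Proposition \ref{weights of the second cohomology g=2} eliminates nearly everything: when $a=b$ is odd the target vanishes; when $a=b$ is even the target is pure of weight $2a+2$, which is strictly smaller than $2a+2m$ for $m\geq2$, so no $V_{a+a}$ can occur; and when $a>b$ the only admissible even weight is $2a+4$, which forces $m=\tfrac{a-b}{2}+2$ and hence the twist $V_{a+b}(a+2)$, with the class landing in the weight-$(2a+4)$, i.e. $\Q(-a-2)$, part of $H^2(\M_2,\V_{a+b})$.

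Finally I would invoke the Gysin analysis. Since $H^2(\A_2,\V_{a+b})=0$ for $a>b$, the Gysin sequence identifies $H^2(\M_2,\V_{a+b})$ with $\ker\delta$, whose weight-$(2a+4)$ part is exactly the image of the summand $\Q(-a-2)\subseteq H^1(\D_{1,1},\V_{a+b}(-1))$. By Theorem \ref{gysin2}, if $s_{a+b+4}>0$ then $\delta$ is injective on this summand, so the weight-$(2a+4)$ part of $H^2(\M_2,\V_{a+b})$ vanishes; by the injection $(H^2(\u_2)\otimes V_{a+b})^{\Sp_2}=0$, which is the first assertion. Assembling the two previous paragraphs, a type $V_{a+b}$ can contribute to $H_2(\u_2)$ only when $a>b$, only in the single twist $V_{a+b}(a+2)$, and only when $s_{a+b+4}=0$. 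The last step is the elementary cusp-form count: for even $k$ one has $s_k=0$ exactly for $k\in\{0,2,4,6,8,10,14\}$, so $s_{a+b+4}=0$ with $a+b\geq2$ even happens precisely for $a+b\in\{2,4,6,10\}$, giving the claimed finite sum. I expect the main obstacle to be the careful bookkeeping of the Tate twists: it is the matching of the forced twist $\tfrac{a+b}{2}+m$ on a degree-$m$ relation against the unique even weight $2a+4$ permitted by Proposition \ref{weights of the second cohomology g=2} that simultaneously pins down the twist $(a+2)$, isolates the single summand $\Q(-a-2)$ on which Theorem \ref{gysin2} acts, and converts the resulting vanishing into the cusp-form condition $s_{a+b+4}>0$.
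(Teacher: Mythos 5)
Your proposal is correct and follows essentially the same route the paper takes (the paper states the corollary without a displayed proof, but the argument is exactly the one laid out in the surrounding text of Section 7): purity of $H_1(\u_2)$ forces $H_2(\u_2)$ into even weights $-2m$ with $m\geq 2$, the weight bounds of Proposition \ref{weights of the second cohomology g=2} then kill the $a=b$ cases and pin down the unique admissible twist $V_{a+b}(a+2)$ for $a>b$, and Theorem \ref{gysin2} together with the vanishing of $s_{a+b+4}$ exactly for $a+b\in\{2,4,6,10\}$ yields both assertions. Your Tate-twist bookkeeping ($a+b+2m=2a+4$ giving $m=\tfrac{a-b}{2}+2$) matches the paper's intended computation.
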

\begin{remark}
	The weights of $H_2(\u_2)$ are given by $-a+b-4$ for $a>b$, and so we see that there are no quadratic relations in $\Gr^W_\bullet\u_2$. The range of the degrees of the relations on $\Gr^W_\bullet\u_2$ is from the cubic to the septic. Thus $\Gr^W_\bullet\u_2$ is finitely presented. However, the author is not able to determine  $H_2(\u_2)$ explicitly at this time. Since $T_2$ is free, Hain's approach does not seem to extend to the case when $g=2$. 
\end{remark}

\subsection{An overview of Petersen's approach on the nonvanishing of Gysin maps} We will briefly go over Petersen's approach on the nonvanishing theorem \cite[\S 5, Thm.~5.1]{pet3}.
 Petersen's idea is that since the cohomology classes of one's interest are represented by Eisenstein series, one should be able to reduce the problem to computations at the boundary. Denote the Baily-Borel compactification of $\A_g$ by $\overline{\A}_g$. Let $j:\A_2\to\overline{\A}_2$ and $\tilde{j}:\A_1\times\A_1\to \overline{\A}_1\times\overline{\A}_1$ be the inclusions into their Baily-Borel compactifications. We will consider the Gysin map between the stalks of the pushforwards  $(\mathrm{R}\tilde{j}_\ast \V_{a+b})^{S_2}$ and $\mathrm{R}j_\ast \V_{a+b}$.  Let $i_0:\A_0\to \overline{\A}_2$ and $\tilde{i}_0:\A_0\times\A_0\to \overline{\A}_1\times\overline{\A}_1$ be the inclusions of their zero-dimensional strata into the respective Baily-Borel compactifications. There is a commutative diagram:
 $$\xymatrix{
 	H^{k-2}(\D_{1,1}, \V_{a+b}(-1))\ar[r]\ar[d]&(\tilde{i}_0^\ast\mathrm{R}^{k-2}\tilde{j}_\ast \V_{a+b}(-1))^{S_2}\ar[d]\\
 	H^{k}(\A_2, \V_{a+b})\ar[r]&i_0^\ast\mathrm{R}^kj_\ast\V_{a+b}
 }
 $$
The nonvanishing of the left-hand Gysin map will be shown if the right-hand Gysin map at stalks maps the class of the Eisenstein series of our interest to a nonzero class in $i_0^\ast\mathrm{R}^kj_\ast\V_{a+b}$.  \\
\indent The first key point  is  that  the stalks on the right  are expressed as the cohomology groups of reductive groups with the Lie algebra cohomology of nilpotent  Lie algebras as coefficient groups. Petersen calls this as Harder's formula (see \cite{LR} for more details). Furthermore, these stalks carry natural MHS. This fact does not play a role for $k=2$, but we will need this for $k=3$.  The zero-dimensional strata of $\AAbar$ and $\Atwobar$ correspond to  the parabolic subgroup $P=B\times B$ of $\SL_2(\Q)\times\SL_2(\Q)$ and the Siegel parabolic subgroup $Q$ of $\Sp_2(\Q)$, respectively, where $B$ is the Borel subgroup of $\SL_2(\Q)$.  The natural map $\A_1\times\A_1\to\A_2$ corresponds to an inclusion $\SL_2\times\SL_2\to \Sp_2$, so we may consider $\SL_2\times\SL_2$ as a subgroup of $\Sp_2$. Then $P$ is the intersection $Q\cap\SL_2\times\SL_2$ in $\Sp_2$. Let $M_P$ and $M_Q$ be the reductive quotients of $P$ and $Q$ by the unipotent radicals $N_P$ and $N_Q$, respectively. Denote the Lie algebras of $N_P$ and $N_Q$ by $\n_P$ and $\n_Q$, respectively. The idea behind the Harder's formula is that the cohomology of the sufficiently small enough open punctured neighborhoods of the cusps $i_0$ and $\tilde{i}_0$ with the coefficient local system $\V_{a+b}$ can be computed by identifying the neighborhoods as the fibrations over the locally symmetric spaces associated to $M_P$ and $M_Q$ whose fibers are the nilmanifolds associated to $N_P$ and $N_Q$, respectively. The Leray spectral sequences of the fibrations degenerate, and hence the local cohomology we are after are given by  $H^{\bullet}(M_{\ast}(\Z), H^{\bullet}(\n_\ast,V_{a+b}))$, where $\ast\in\{P,Q\}$. The Gysin maps on the stalks are obtained by considering the Leray spectral sequences associated to the two fibrations and the Gysin sequence for the embedding of the neighborhood of $\tilde{i}_0$ into that of $i_0$. This is nicely explained in \cite[\S 5.3]{pet3}. \\
\indent The second important point of Petersen's method is that the nonvanishing of the Gysin maps on the stalks is related to the nontriviality of the modular symbols associated to eigen cusp forms. It comes down to compute the restriction map $H^1_c(\A_1, \H_{m}\otimes \C)\to H^1_c(\mathbb{R}_{>0}, \H_{m}\otimes \C)=H_m\otimes\C$, which is induced by the inclusion of symmetric spaces $\mathbb{R}_{>0}\to \h$. The $\Gm$ action decomposes $H_m\otimes \C$ into $m+1$ one-dimensional eigenspaces $H_{m}\otimes \C=\bigoplus_{l=0}^{m}E_{m-2l}.$ For a cusp form $f$ of weight $m+2$, we have a class $[f]$ in $H^1_c(\A_1,\H_{m}\otimes \C)$ and we are interested in the image of $[f]$ in $\bigoplus_{l=0}^{m}E_{m-2l}$. For $k=2$,  the nonvanishing of the Gysin map at the stalks is equivalent to the nontriviality of the image $[f]$ in the eigenspace $E_0$, which is in fact related to the central value of the $L$-function attached to $f$, where $f$ here is a normalized Hecke eigenform. For $k=3$, we will consider the image of $[f]$ in $E_{-a+b-2}$, and the nontriviality will follow from the fact that $L(f,n+1)$ is nontrivial for 
$n\geq\frac{a+b+4}{2},$ which is a basic fact about the convergence of the corresponding Euler product. 
\subsection{Proof of Theorem \ref{gysin2}} As stated above, we will follow Petersen's approach with some modification to our case. 
We will use the same notation as above. The natural map $\A_1\times \A_1\to \A_2$ corresponds to the inclusion $\SL_2\times\SL_2\to \Sp_2$ given by
$$\begin{bmatrix}a&b\\c&d\end{bmatrix}\times \begin{bmatrix}a'&b'\\c'&d'\end{bmatrix}\mapsto
\begin{bmatrix}a&0&b&0\\
0&a'&0&b'\\
c&0&d&0\\
0&c'&0&d'
\end{bmatrix}.$$ 
The Siegel parabolic subgroup $Q$ of $\Sp_2$ consists of the matrices whose lower left quadrant is all zero. The parabolic subgroup $P$ of $\SL_2\times \SL_2$ is the intersection of $Q$ with $\SL_2\times \SL_2$ in $\Sp_2$. 
The reductive quotients $M_P$ and $M_Q$ of  $P$ and $Q$, respectively, consist of matrices of the form
$$\begin{bmatrix}a&0&0&0\\
                 0&a'&0&0\\
                 0&0&a^{-1}&0\\
                 0&0&0&a'^{-1}
                 \end{bmatrix},
                 \hspace{.5in}
                 \left[\begin{array}{cc}
                 A&0\\
                 0&A^{-T}
                 \end{array}\right],$$
                 respectively, where $A$ in $\GL_2$. 
       Therefore, we have $M_Q\cong \GL_2$. Let $B$ be the standard Borel subgroup of $\SL_2$ consisting of upper triangular matrices, and  $M_B$ the reductive quotient of $B$ by the unipotent radical $N_B$, whose elements are of the form 
       $$\begin{bmatrix}1&a\\0&1\end{bmatrix}.$$ Let $T$ be the maximal $\Q$-split torus of $\SL_2$. The isomorphism $\Gm\cong T$ is given by $$t\mapsto \begin{bmatrix}t&0\\0&t^{-1}\end{bmatrix}.$$ Thus the action of $T$ on $N_B$ has the weight $t^2$.     
        Denote the Lie algebra of $N_B$ by $\n_B$. Then $M_p=M_B\times M_B\cong \Gm\times\Gm$ and $\n_P=\n_B\times\n_B$. The group $\Gm(\Z)$ is cyclic of order 2, which we denote by $<\sigma>$. \\
\indent Assume that $a+b$ is even and $a>b$. The restriction of $\V_{a+b}$ to $\D_{1,1}$ contains a single copy of $\UU_{a-b,0}(-b)$ as a summand, and we have seen that $H^1(\D_{1,1}, \V_{a+b})=H^1(\D_{1,1}, \UU_{a-b,0}(-b))$. Thus, we will consider the following commutative diagram
$$\xymatrix{
	H^{1}(\D_{1,1}, \UU_{a-b,0}(-b-1))\ar[r]\ar[d]&\tilde{i}_0^\ast\mathrm{R}^{1}\tilde{j}_\ast \UU_{a-b,0}(-b-1)\ar[d]\\
	H^{3}(\A_2, \V_{a+b})\ar[r]&i_0^\ast\mathrm{R}^3j_\ast\V_{a+b}
}
$$
We have seen that $H^{1}(\D_{1,1}, \UU_{a-b,0}(-b-1))= W_{a+b+3} \oplus \Q(-a-2)$.  Harder's formula and a K\"{u}nneth formula give the isomorphisms 
\begin{align*}
\tilde{i}_0^\ast\mathrm{R}^{1}\tilde{j}_\ast \UU_{a-b,0}&\cong \bigoplus_{l=0}^1H^l(M_P(\Z), H^{1-l}(\n_P, U_{a-b,0}))\\
&\cong H^0(<\sigma>\times <\sigma>, H^1(\n_P, U_{a-b,0}))\\
&\cong H^0(<\sigma>\times <\sigma>, H^\bullet(\n_B, H_{a-b})\otimes H^\bullet(\n_B, \Q))\\
&\cong (H^1(\n_B, H_{a-b})\otimes H^0(\n_B, \Q))\oplus (H^0(\n_B, H_{a-b})\otimes H^1(\n_B, \Q)).
\end{align*}
Note that the action of $\sigma$ on the Lie algebra cohomology of $\n_B$ is trivial when $a+b$ is even. We will consider $H_{m}\otimes K$ as a $K$-vector space of homogeneous polynomials of degree $m$ in two variable $X$ and $Y$. In this paper, $K$ is either $\Q$ or $\C$. The action of $\SL_2(K)$ on $H_{m}\otimes K=K[X,Y]_m$ is given by 
$$\alpha\cdot P(X,Y)=\begin{bmatrix}
a&b\\c&d
\end{bmatrix}P(X,Y)=P(aX+cY,bX+dY).$$
Under this action of $\SL_2(\Q)$ on $H_m$, we have 
$$H^0(\n_B, H_m)\cong \Q X^m \text{ and } H^1(\n_B, H_m)\cong \Q Y^m\otimes \eta,$$
where the element $\eta$ is the dual of the generator of $\n_B$. Note that $T$ acts on $\eta$ by $t^{-2}$.  Thus  there is an isomorphism of MHSs
$$\tilde{i}_0^\ast\mathrm{R}^{1}\tilde{j}_\ast \UU_{a-b,0}\cong \Q Y^{a-b}\otimes \eta \oplus \Q X^{a-b}\otimes \eta,$$
where the left-hand term is pure of weight $2(a-b)+2$ and the right-hand term pure of weight $2$. 
\begin{proposition}
	The generator of the term $\Q(-(a-b)-1))$ in $H^{1}(\D_{1,1}, \UU_{a-b,0})$ maps nontrivially into the term $\Q Y^{a-b}\otimes \eta$. 
\end{proposition}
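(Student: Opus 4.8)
The plan is to reduce the statement to the analogous assertion for a single modular curve and then to invoke the basic dichotomy of Eichler--Shimura theory between cuspidal and Eisenstein cohomology. Throughout set $m=a-b$, which is even and positive.

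First I would untangle the restriction map through the K\"unneth decomposition. Writing the boundary point as the product of the two cusps, the stalk of $\mathrm{R}^1\tilde{j}_\ast(\H_m\boxtimes\Q)$ splits as
$$\bigl((\mathrm{R}^1 j_\ast \H_m)_\infty\otimes(\mathrm{R}^0 j_\ast\Q)_\infty\bigr)\oplus\bigl((\mathrm{R}^0 j_\ast\H_m)_\infty\otimes(\mathrm{R}^1 j_\ast\Q)_\infty\bigr),$$
and these two summands are precisely $\Q Y^m\otimes\eta$ and $\Q X^m\otimes\eta$. Since $H^0(\A_1,\H_m)=0$ and $H^1(\A_1,\Q)=0$ for $m>0$, the only component of $H^1(\D_{1,1},\UU_{a-b,0})\cong H^1(\A_1,\H_m)$ that can restrict nontrivially is the one feeding the first summand; concretely the restriction factors as the single--curve map
$$r\colon H^1(\A_1,\H_m)\longrightarrow (\mathrm{R}^1 j_\ast\H_m)_\infty=\Q Y^m\otimes\eta .$$
Thus the image lands in $\Q Y^m\otimes\eta$ automatically, and it remains only to prove that $r$ does not kill the Eisenstein line. (Alternatively one argues by weights: $r$ is a morphism of MHS, the Eisenstein class has weight $2m+2$, and $\Q Y^m\otimes\eta$ is the unique summand of that weight, $\Q X^m\otimes\eta$ having weight $2$.)

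Next I would identify the source. By Eichler--Shimura together with the splitting of the weight filtration (\cite{ser}), $H^1(\A_1,\H_m)=H^1(\SL_2(\Z),H_m)$ is the sum of its interior (cuspidal) part, pure of weight $m+1$, and the Eisenstein line $\Q(-m-1)=\Q(-(a-b)-1)$, which is exactly $\Gr^W_{2m+2}$. The map $r$ is restriction to the Borel--Serre (equivalently Baily--Borel) boundary, whose degree-one cohomology in $\H_m$ is $H^1(\langle T\rangle,H_m)=\mathrm{coker}(N)\otimes\eta=\Q Y^m\otimes\eta$, where $N$ is the logarithm of the unipotent monodromy around the cusp.

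The decisive point, and the step I expect to need the most care, is that $\ker r$ equals the interior cohomology. This is immediate from the long exact sequence of the pair $(\overline{\A}_1,\partial)$,
$$H^1_c(\A_1,\H_m)\longrightarrow H^1(\A_1,\H_m)\xrightarrow{\ r\ } H^1(\partial,\H_m)\longrightarrow H^2_c(\A_1,\H_m),$$
since $\ker r=\mathrm{im}\bigl(H^1_c\to H^1\bigr)$ is by definition the interior cohomology, and the latter is the cuspidal part by Eichler--Shimura. Hence $r$ is injective on the complementary Eisenstein line $\Q(-(a-b)-1)$, so its generator in $H^1(\D_{1,1},\UU_{a-b,0})$ has nonzero image, which by the first paragraph lies in $\Q Y^{a-b}\otimes\eta$. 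The only bookkeeping to watch is the compatibility of the Tate twists and the identification of the Borel--Serre boundary cohomology with the stalk of $\mathrm{R}^1\tilde{j}_\ast$, both standard for the modular curve.
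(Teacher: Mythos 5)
Your proof is correct and follows essentially the same route as the paper: both reduce the statement to the single modular curve $\A_1$ and use the nonvanishing of the restriction of the Eisenstein class to the cusp, together with a weight (equivalently, K\"unneth-degree) argument to identify the target summand as $\Q Y^{a-b}\otimes \eta$. The only differences are that the paper performs the reduction by pulling back along the diagonal $\A_1\to\A_1\times\A_1$ rather than by K\"unneth naturality, and that it cites as well-known the fact that $r$ is nonzero on the Eisenstein line, which you instead derive from the long exact sequence identifying $\ker r$ with the interior (cuspidal) cohomology.
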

\begin{proof} Let $m=a-b$. Let $\Delta:\A_1\to \A_1\times \A_1$  and $\overline{\Delta}: \overline{\A}_1\to \AAbar$ be the diagonal maps. Consider the following commutative diagram:
	$$\xymatrix{
		H^1(\A_1, \H_{m})\ar[r]^r& H^1(\partial\A_1, \H_m)\\
		H^1(\A_1\times\A_1, \UU_{m,0})\ar[r]\ar[u]_{\Delta^\ast}& \tilde{i}_0^\ast\mathrm{R}^{1}\tilde{j}_\ast \UU_{m,0}\ar[u]_{\overline{\Delta}^\ast}
	}
	$$
	where $H^1(\partial\A_1, \H_m)$ is the boundary cohomology of $\A_1$ with the local coefficient system $\H_m$ and the upper horizontal map $r$ is the restriction map. It is well-known that the map $r$ takes the generator of $\Q(-m-1)\subset H^1(\A_1, \H_m)$ to the generator of $H^1(\partial\A_1, \H_m)\cong \Q Y^m\otimes \eta$. Thus the image of the term $\Q(-m-1)$ in the stalk $\tilde{i}_0^\ast\mathrm{R}^{1}\tilde{j}_\ast \UU_{m,0}$ is nontrivial. Furthermore, the weight of the term $\Q Y^{m}\otimes \eta$ is given by $m+m+2=2m+2$, which implies that the image of $\Q(-m-1)$ in $\tilde{i}_0^\ast\mathrm{R}^{1}\tilde{j}_\ast \UU_{m,0}$ is equal to $\Q Y^m\otimes \eta$. 

\end{proof}

Now, we will consider the Gysin map between the stalks. Harder's formula applied to the stalk $i^\ast_0\mathrm{R}^3j_\ast\V_{a+b}$ gives an isomorphism
$$i^\ast_0\mathrm{R}^3j_\ast\V_{a+b}\cong \bigoplus_{l=0}^3H^l(M_Q(\Z), H^{3-l}(\n_Q, V_{a+b})).$$

The coefficient group $H^\bullet(\n_Q, V_{a+b})$ are  $M_Q\cong \GL_2$ representations, and Kostants theorem \cite[Table 2.3.4]{sch} gives us the corresponding $M_Q$ representations. As in the case $k=2$ (Petersen's proof), we will only need to consider them as the representations of the derived subgroup of $M_Q$, which is isomorphic to $\SL_2$. These are 
$$H^l(\n_Q, V_{a+b})\cong	\left\{
\begin{array}{ll}
	H_{a-b} &\text{ for } l=0,3 \\
	H_{a+b+2}& \text{ for } l=1,2. \\

\end{array} 
\right. 
$$
Note that $H^\bullet(\GL_2(\Z), H_m)\cong H^\bullet(\SL_2(\Z), H_m)^{<\sigma>}$, and hence  the stalk above is isomorphic to $H^1(\SL_2(\Z), H_{a+b+2})^{<\sigma>} \cong H^1(\A_1, \H_{a+b+2})^{<\sigma>}$. \\
\indent The Gysin map $	\tilde{i}_0^\ast\mathrm{R}^{1}\tilde{j}_\ast \UU_{a-b,0}\to i_0^\ast\mathrm{R}^3j_\ast\V_{a+b}$ can be obtained by considering the embedding of the sufficiently small open neighborhood $W_P$ of $\tilde{i}_0$  in $\A_1\times\A_1$ into  that $W_Q$ of $i$ in $\A_2$. Let $\ast \in \{P,Q\}$. The open neighborhood $W_\ast$ can be identified as a fibration over the locally symmetric space associated to $M_\ast$ with the fiber $N_\ast(\Z)\backslash N_\ast(\mathbb{R})$. The embedding yields the commutative diagram:
$$\xymatrix{
	N_P(\Z)\backslash   N_P(\mathbb{R})\ar[r]\ar[d]&N_Q(\Z)\backslash  N_Q(\mathbb{R})\ar[d]\\
	W_P\ar[r]\ar[d]& W_Q\ar[d]\\
	M_P(\Z)\backslash M_P(\mathbb{R})/ K_{M_P}\ar[r]& M_Q(\Z)\backslash M_Q(\mathbb{R})/K_{M_Q}.
}
$$ 
 The fact that symmetric spaces $M_\ast(\mathbb{R})/K_{M_\ast}$ are contractible implies that the cohomology of $M_\ast(\Z)\backslash M_\ast(\mathbb{R})/ K_{M_\ast}$ agrees with the cohomology of $M_\ast(\Z)$. The van Est isomorphism describes the cohomology of $N_\ast(\Z)\backslash N_\ast(\mathbb{R})$ as that of the Lie algebra $\n_\ast$. Then the corresponding map between the cohomology groups $H^\bullet(M_\ast(\Z), H^\bullet(\n_\ast, \V))$ with $\ast\in\{P,Q\}$ arises as part of the Gysin map induced between  the Leray spectral sequences associated to these fibrations. Thus we obtain the commutative diagram 
$$\xymatrix{
	\tilde{i}_0^\ast\mathrm{R}^{1}\tilde{j}_\ast \UU_{a-b,0}\ar[d]^{\cong}\ar[r] & i_0^\ast\mathrm{R}^3j_\ast\V_{a+b}\ar[d]^{\cong}\\
	H^0(M_P(\Z), H^1(\n_P, U_{a-b,0}))\ar[r]^g & H^1(M_Q(\Z), H^2(\n_Q, V_{a+b}))\\
}
$$
Since the Hodge weights play no role for the rest of the proof, we will omit Tate twists. \\
\indent We are led to understand the bottom horizontal map $g$. 
Petersen's idea is that this map is induced by the restriction of a local system over $\A_1$ along the inclusion of the locally symmetric space $\mathbb{R}_{>0}\times B<\sigma>$ of $\Gm$ into $\A_1$ of $\SL_2$, where $B<\sigma>$ is the classifying space for the group $<\sigma>$, and that it can be considered as the evaluations of the modular symbols associated to normalized eigenforms of a certain weight. Consider the commutative diagram of groups
$$\xymatrix{
	1\ar[r]&\Gm\ar[r]\ar[d]&M_P\ar[r]^{\mathrm{det}_P}\ar[d]&\Gm\ar[r]\ar@{=}[d]&1\\
	1\ar[r]&\SL_2\ar[r]&M_Q\ar[r]^{\mathrm{det}_Q}&\Gm\ar[r]&1,
}
$$
where the map $\mathrm{det}_P$ associates $aa'$ to a matrix $\mathrm{diag}(a, a', a^{-1}, a'^{-1})$ in $M_P$ and the map $\mathrm{det}_Q$ $\mathrm{det}(A)$ to an matrix $A$ in $M_Q$, and the left-hand vertical map is induced by the middle inclusion. Assigning to each group in the diagram the corresponding locally symmetric space, we obtain an inclusion of fibrations. It then follows from the Leray spectral sequences of the two fibrations that there are isomorphisms

$$H^0(M_P(\Z), H^1(\n_P, U_{a-b,0}))\cong H^0(\mathbb{R}_{>0}, \Q Y^{a-b}\otimes\eta\oplus \Q X^{a-b}\otimes\eta)^{<\sigma>}$$
and 
$$H^1(M_Q(\Z), H^2(\n_Q, V_{a+b}))\cong H^1(\A_1, \H_{a+b+2})^{<\sigma>}.$$
For a rational representation $V$ of $M_Q$, there is the restriction 
$$H^\bullet_c(M_Q(\Z)\backslash M_Q(\mathbb{R})/K_{M_Q}, \V)\to H^\bullet_c(M_P(\Z)\backslash M_P(\mathbb{R})/K_{M_P}, \V),$$ 
where $\V$ is the local system corresponding to  $V$.  Then for $V=H^1(\n_Q, V_{a+b})$, we observe that the dual of the restriction contains the map $g$. Thus there is the commutative diagram
$$\xymatrix{
	H^0(M_P(\Z), H^1(\n_P, U_{a-b,0}))\ar[r]^{\cong\hspace{.3in}}\ar[d]_g&H^0(\mathbb{R}_{>0}, \Q Y^{a-b}\otimes\eta\oplus \Q X^{a-b}\otimes\eta)^{<\sigma>}\ar[d]\\
	H^1(M_Q(\Z), H^2(\n_Q, V_{a+b}))\ar[r]^{\cong}&H^1(\A_1, \H_{a+b+2})^{<\sigma>}.
}
$$
We also denote by $g$ the right-hand vertical map of the diagram. The map $g$ is the restriction to the $<\sigma>$- invariants of the map
$$\tilde{g}:H^0(\mathbb{R}_{>0}, \Q Y^{a-b}\otimes\eta\oplus \Q X^{a-b}\otimes\eta)\to H^1(\A_1, \H_{a+b+2}),$$
which is induced by the inclusion $\Gm\to \SL_2$.
We have seen that 
$$H^0(\mathbb{R}_{>0}, \Q Y^{a-b}\otimes\eta\oplus \Q X^{a-b}\otimes\eta)^{<\sigma>}\cong \Q Y^{a-b}\otimes \eta\oplus \Q X^{a-b}\otimes \eta,$$
and so $\tilde{g}$ factors as 
$$\Q Y^{a-b}\otimes \eta\oplus \Q X^{a-b}\otimes \eta\to H^1(\A_1, \H_{a+b+2})^{<\sigma>}\to H^1(\A_1, \H_{a+b+2}).$$
Recall that $s_{a+b+4}$ denotes the dimension of the space of the cusp forms of weight $a+b+4$ for $\SL_2(\Z)$.
\begin{proposition}\label{key fact 2}
	If $s_{a+b+4}>0$, then the generator $Y^{a-b}\otimes \eta$ maps nontrivially into $H^1(\A_1, \H_{a+b+2})$. 
\end{proposition}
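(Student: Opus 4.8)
The plan is to adapt Petersen's modular-symbol computation to the eigenspace $E_{-a+b-2}$ in place of the central eigenspace $E_0$ used for $k=2$. As recalled above, the map $\tilde g$ is, up to the Poincar\'e duality pairings on $\A_1$ and on the geodesic $\mathbb{R}_{>0}\hookrightarrow\h$, dual to the restriction map
$$\mathrm{res}\colon H^1_c(\A_1,\H_{a+b+2}\otimes\C)\to H^1_c(\mathbb{R}_{>0},\H_{a+b+2}\otimes\C)=H_{a+b+2}\otimes\C .$$
Consequently $\tilde g(Y^{a-b}\otimes\eta)$ is nonzero as soon as there is a class $c\in H^1_c(\A_1,\H_{a+b+2}\otimes\C)$ whose restriction $\mathrm{res}(c)$ is detected by $Y^{a-b}\otimes\eta$. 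Since $Y^{a-b}\otimes\eta$ is a $T$-eigenvector of weight $-a+b-2$, this amounts to the nontriviality of the $E_{-a+b-2}$-component of $\mathrm{res}(c)$ in the decomposition $H_{a+b+2}\otimes\C=\bigoplus_{l=0}^{a+b+2}E_{a+b+2-2l}$. So the first step is to reduce the statement to the nonvanishing of a single eigenspace coordinate of a boundary restriction.

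For the witnessing class I would take $c=[f]$, where $f$ is a normalized Hecke eigenform of weight $a+b+4$; such an $f$ exists because $s_{a+b+4}>0$, and $[f]$ is nonzero in $H^1_c(\A_1,\H_{a+b+2}\otimes\C)$ by the Eichler--Shimura isomorphism. Its restriction to the geodesic $\mathbb{R}_{>0}$, i.e.\ the modular symbol $\{0,\infty\}$, is given by the period integrals $\int_0^{\infty}f(\tau)\tau^{j}\,d\tau$, and by the Mellin transform $\int_0^\infty f(iy)y^{s-1}\,dy=(2\pi)^{-s}\Gamma(s)L(f,s)$ each such period is a nonzero multiple of the critical value $L(f,j+1)$. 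Matching $T$-weights through Kostant's description $H^2(\n_Q,V_{a+b})\cong H_{a+b+2}$ then identifies the $E_{-a+b-2}$-component of $\mathrm{res}([f])$ with a nonzero multiple of $L(f,a+3)$; this is the place where the noncentral eigenspace forces the noncentral value $a+3$ to appear.

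It remains to see $L(f,a+3)\neq0$, and this is the elementary point. Since $a+b$ is even and $a>b$ one has $a\geq b+2$, so $a+3>(a+b+5)/2$; thus $s=a+3$ lies strictly to the right of the abscissa of absolute convergence of the Dirichlet series and Euler product of $L(f,s)$ (using the Ramanujan--Deligne bound $|a_p|\leq 2p^{(a+b+3)/2}$ on the Hecke eigenvalues), where the value is a convergent product of nonzero Euler factors and hence nonzero. Together with the first two steps this gives $\tilde g(Y^{a-b}\otimes\eta)\neq0$. Passage to $\langle\sigma\rangle$-invariants causes no loss: because $a+b$ is even the action of $\sigma$ on the relevant Lie algebra cohomology is trivial, so the class already lives in, and is nonzero in, $H^1(\A_1,\H_{a+b+2})^{\langle\sigma\rangle}$.

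The step I expect to be the main obstacle is the identification in the first paragraph: checking that the abstract Gysin map on the stalks $\tilde i_0^\ast\mathrm{R}^1\tilde j_\ast\to i_0^\ast\mathrm{R}^3j_\ast$, built from Harder's formula and the Leray spectral sequences of the two nilmanifold fibrations, is genuinely computed by restricting the cuspidal class to the geodesic and hence by the period integral. This compatibility is exactly what we are importing and adapting from Petersen's treatment of the case $k=2$; once it is in place, the only new input is that the target eigenspace is $E_{-a+b-2}$, which replaces the delicate central value $L(f,(a+b+4)/2)$ by the value $L(f,a+3)$ lying in the range of absolute convergence, so that a convergence argument suffices and no genuine nonvanishing theorem for central $L$-values is needed.
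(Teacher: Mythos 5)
Your proposal follows the paper's proof almost verbatim: dualize $\tilde g$ to the restriction $H^1_c(\A_1,\H_{a+b+2}\otimes\C)\to H_{a+b+2}\otimes\C$, feed in the class $[f]$ of a normalized Hecke eigenform of weight $a+b+4$ (which exists since $s_{a+b+4}>0$), read off the relevant period via the Mellin transform, and conclude by convergence of the Euler product at $s=a+3$ (for which the elementary bound $\mathrm{Re}(s)>\tfrac{a+b+2}{2}+2$ already suffices once $a>b$; the Ramanujan--Deligne bound is not needed). The one place where your bookkeeping diverges from the paper's is the weight-matching step: the $E_{-(a-b)-2}$-component of the period polynomial $r(f)(X,Y)=\sum_n\binom{a+b+2}{n}r_n(f)X^nY^{a+b+2-n}$ is the coefficient of $X^bY^{a+2}$, namely a nonzero multiple of $r_b(f)$, hence of $L(f,b+1)$ --- a value inside the critical strip --- and \emph{not} directly of $L(f,a+3)$. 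The paper therefore inserts the functional equation relating $L(f,b+1)$ and $L(f,a+b+4-(b+1))=L(f,a+3)$, whose archimedean factors are nonzero, before invoking convergence. Your version either conceals this step inside the self-duality $H_m\cong H_m^\ast$ (which pairs $E_w$ with $E_{-w}$ and would instead pick out $r_{a+2}(f)\sim L(f,a+3)$) or has a sign slip in the eigenspace label; either way the conclusion survives, but as written the convergence argument does not apply to the value your computation actually produces, so you should make the functional-equation (or duality) step explicit.
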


Before getting to the proof of this claim, we will briefly review the periods and modular symbols of a cusp form, and the $L$-function attached to a Heck eigenform. Let $f$ be a cusp form of weight $m+2$. The {\it $n$th period} of $f$ is defined to be
$$r_n(f)=\int_0^{i\infty} f(z)z^n~dz$$

for $0\leq n\leq m$. By identifying $H_m\otimes \C=\C[X,Y]_m$, the {\it period polynomial} $r(f)\in H_m\otimes \C$ of $f$ is defined to be
$$r(f)(X,Y)=\int_0^{i\infty}f(z)(zX+Y)^m~dz=\sum_{n=0}^m \binom mn r_n(f) X^nY^{m-n}.$$ The differential form $f(z)(zX+Y)^mdz$ defines a class $[f]$ in $H^1_c(\A_1, \H_m)$ and integrating along the imaginary axis corresponds to the restriction map
$$H^1_c(\A_1, \H_m\otimes\C)\to H^1_c(\mathbb{R}_{>0}, \H_m\otimes \C)=H_m\otimes \C,$$
where $\mathbb{R}_{>0}$ is embedded into $\h$ by $a\mapsto ia$. Thus the class $[f]$ maps to the period polynomial $r(f)(
X,Y)$. The $\Gm$-action decomposes $H_m$ into $m+1$ one-dimensional eigenspaces
$$H_m= \bigoplus_{n=0}^{m}E_{-m+2n}.$$
We may identify $X^{n}Y^{m-n}$ as a generator of $E_{-m+2n}$, and so $E_{-m+2n}=\Q X^{n}Y^{m-n}$. Composing the restriction map with the projection 
$$p_n: H^1_c(\mathbb{R}_{>0}, \H_m\otimes \C)\to H^1_c(\mathbb{R}_{>0}, E_{-m+2n}\otimes\C)=\C X^nY^{m-n}=\C$$ 
gives an evaluation, called {\it a modular symbol}, associating the value $\binom mn r_n(f)$ to a cusp form $f$.  \\
Let $q=e^{2\pi i z}$ for $z\in \h$. Let $f(q)=\sum a(n) q^n$ be the expansion of $f$ on the $q$-disk.  If $f$ is a Hecke eigenform and normalized, i.e., $a(1)=1$, then the $L$-function of $f$ is defined to be the Dirichlet series
$$L(f,s)=\sum_{n>0}a(n)n^{-s},$$
which is known to be convergent for $\mathrm{Re}(s)> \frac{m}{2}+2$. 

Furthermore, as $f$ is a normalized eigenform, the $L$-function has an Euler product expansion

$$L(f,s)=\prod_p(1-a(p)p^{-s}+p^{k-1-2s})^{-1}. $$
 
In our case, this is a key point and  for $s=n+1$ with $2n> m+2$, the Euler product converges and hence $L(f, n+1)$ is nontrivial. A key relation between the $L$-function and the period of a normalized eigenform $f$ is given by the Mellin transform
$$L(f,s)=\frac{(2\pi)^s}{\G(s)}\int_0^{i\infty}(-iz)^sf(z)\frac{dz}{z}.$$
For $s=n+1$, we have
$$r_n(f)=n!(-2\pi i)^{-n-1}L(f,n+1).$$
Another key property of the $L$-function $L(f,s)$ is that there is a functional equation relating $L(f,s)$ and $L(f,m+2-s)$. 
Now, we will prove Proposition \ref{key fact 2}.
\begin{proof}
We note that the map $H^0(\mathbb{R}_{>0}, \Q Y^{a-b}\otimes \eta)\to H^1(\A_1, \H_{a+b+2})$ is the dual of the restriction map
$$H^1_c(\A_1, \H_{a+b+2})\to H^1_c(\mathbb{R}_{>0}, \Q Y^{a-b}\otimes \eta).$$
Here we are identifying $H_m\cong H_m^\ast$ induced by the polarization, and moreover there is an isomorphism $\Q Y^{a-b}\otimes \eta\cong E_{-(a-b)-2}$ as $\Gm$-representations.
Let $f$ be a normalized eigenform of weight $a+b+4$. Let $[f]$ be the class in $H^1_c(\A_1, \H_{a+b+2}\otimes\C)$ corresponding to the 1-form $f(z)(zX+Y)^{a+b+2}dz$. Then the image of the class $[f]$ in 
$$H^1_c(\mathbb{R}_{>0}, \C Y^{a-b}\otimes\eta)\cong E_{-(a-b)-2}\otimes \C\cong\C X^bY^{a+2}$$
is given by $\binom{a+b+2}{b} r_b(f) X^bY^{a+2}.$ Thus we consider the value $L(f,b+1)$. By the functional equation mentioned above, the value $L(f,b+1)$ is nontrivial if and only  if $L(f, a+3)$. The assumption that $a>b$ implies that $2(a+2)>a+b+4$, and hence the Euler product for $L(f,a+3)$ converges and so $L(f,a+3)$ is nontrivial. 
\end{proof}
This completes the proof of Theorem \ref{gysin2}.



\begin{thebibliography}{}
	\bibitem{Bor}
	A.~Borel:
	{\em Stable real cohomology of arithmetic groups II,}, Manifolds and Groups, Papers in Honor of Yozo Matsushima, Progress in Mathematics 14, Birkhauser, Boston, 1981, 21-55. MR 83h:22023
     
     \bibitem{FH}
     W.~Fulton and J.~Harris:
     {\em Representation theory: A first course} Graduate Texts in Mathematics, 129, Springer-Verlag, 1991.
     
      \bibitem{joh1}
      D.~Johnson:
      {\em An abelian quotient of the mapping class group $\T_g$}, Math.~Ann. 249 (1980), 225-242. MR 87a:57005
      
      
    \bibitem{joh}
    D.~Johnson:
    {\em The structure of the Torelli group. I. A finite set of generators for $\T$} Ann.~ of Math. (2), 118(3):423-442, 1983.
    
    \bibitem{hain1} 
    R.~Hain:
    {\em Completions of mapping class groups and the cycle $C - C^{-}$}, Comtemporary Math.~150 (1993),~ 75--105.
    
    \bibitem{hain4}
    R.~Hain:
    {\em Genus 3 Mapping Class Groups are not Kähler}, Journal of Topology, vol. 8 no. 1 (2015), pp. 213-246, Oxford University Press, ISSN 1753-8416.
    
    \bibitem{hain5}
    R.~Hain:
    {\em The Hodge-de~Rham theory of modular groups}, in Recent Advances in Hodge Theory, arxiv [arXiv:1403.6443] (2015), Cambridge University Press 
    
    \bibitem{hain3}
    R.~Hain:
    {\em The Hodge De Rham theory of relative Malcev completion}, Annales Scientifiques de l'Ecole Normale Superieure, vol. 31 no. 1 (1998), pp. 47-92
    
    \bibitem{hain0}
    R.~Hain:
    {\em Infinitesimal presentations of Torelli groups}, J.~Amer.~Math.~Soc. 10 (1997), 597-651.
    
    \bibitem{hain2}
    R.~Hain:
    {Relative weight filtrations on completions of mapping class groups}, in Groups of Diffeomorphisms, Advanced Studies in Pure Mathematics, vol.~52 (2008), pp.~309-368, Mathematical Society of Japan.
    
    \bibitem{kab}
    A.~Kabanov:
    {\em The second cohomology of moduli spaces of Riemann surfaces with twisted coefficients}, Thesis, Duke University, 1995.
    
    \bibitem{LR}
    E.~Looijenga and M.~Rapoport:
    {\em Weights in the local cohomology of a Baily-Borel compactification} Complex geometry and Lie Theory (Sundance, UT, 1989). Vol.~53. Proc. Sympos. Pure Math. Amer. Math. Soc., Providence, RI, 223-260.
    
    \bibitem{mes}
    G.~Mess:
    {\em The Torelli groups for genus for genus 2 and 3 surfaces}, Topology 31 (1992), 775-790.
    
    \bibitem{mor}
    J.~Morgan:
    {\em The algebraic topology of smooth algebraic varieties}, Publ.~ Math.~IHES 48 (1978), 137-204.
    
    \bibitem{pet1}
    D.~Petersen:
    {\em Cohomology of local systems on loci of $d$-elliptic abelian surfaces}, Michigan Math. J. 62 (4), (2013), 705-720.
    
     \bibitem{pet2}
     D.~Petersen:
     {\em Cohomology of local systems on the moduli of principally polarized abelian surfaces}, Pacific J. Math. 275, (2015), 39-61.
     
    \bibitem{pet3}
    D.~Petersen:
    {\em Tautological rings of spaces of pointed genus two curves of compact type}, Compositio Mathematica, 152, (2016), 1398-1420.
    
   \bibitem{pet4}
   D.~Petersen:
   {\em On a conjectural formula for the cohomology of local systems on $M_2$}, unpublished. 
   
   \bibitem{sch}
   J.~Schwerwer:
   {\em On Euler products and residual Eisenstein cohomology classes for Siegel modular varieties}, Forum Math. 7 (1), 1-28.
   
   \bibitem{ser}
   J.~-P.~Serre:
   {\em A course in arithmetic}, GTM 7, Springer-Verlag, 1973.
\end{thebibliography}
\end{document}